\documentclass[11pt]{article}
\usepackage[T2A]{fontenc}
\usepackage[cp1251]{inputenc}
\usepackage{amsmath}
\usepackage{amssymb,amsthm}
\newcounter{propcounter}
\newtheorem{theorem}[propcounter]{Theorem}

\newtheorem{proposition}[propcounter]{Proposition}

\newcommand{\trace}{\mathop{\rm Tr}}
\newcommand{\ad}{\mathop{\rm ad}}
\newcommand{\riemann}{\mathop{\rm R}}

\setlength{\textheight}{196.5 mm} \setlength{\textwidth}{135 mm}

\makeatletter

\renewcommand{\section}{\@startsection{section}{1}{\parindent}
{3.5ex plus 1ex minus .2ex}{2.3ex plus .2ex}{\normalsize
\bfseries}}

\makeatother

\begin{document}

\begin{center}
\Large Submanifolds with the Harmonic Gauss Map in Lie Groups
\end{center}

\vskip 1.5mm

\begin{center}
\large E. V. Petrov\footnote{The author was partially supported by N. I. Akhiezer Foundation and the Foundation of
Fundamental Research of Ukraine (Project No. GP/F13/0019 for young scientists).}
\end{center}

\vskip 1.5mm

\begin{center}
\normalsize \itshape V. N. Karazin Kharkiv National University\\
4 Svobody sq., Kharkiv, 61077, Ukraine\\
E-mail: petrov@univer.kharkov.ua\\
\end{center}

\vskip 1.5mm

\begin{center}
\begin{minipage}[t]{130mm}
\small In this paper we find a criterion for the Gauss map of an immersed smooth submanifold in some Lie group with
left invariant metric to be harmonic. Using the obtained expression we prove some necessary and sufficient conditions
for the harmonicity of this map in the case of totally geodesic submanifolds in Lie groups admitting biinvariant
metrics. We show that, depending on the structure of the tangent space of a submanifold, the Gauss map can be harmonic
in all biinvariant metrics or non-harmonic in some metric. For $2$-step nilpotent groups we prove that the Gauss map of
a geodesic is harmonic if and only if it is constant.\\
{\itshape 2000 Mathematics Subject Classification.} Primary 53C42. Secondary 53C43, 22E25, 22E46.\\
{\itshape Keywords.} Left invariant metric, biinvariant metric, Gauss map, harmonic map, $2$-step nilpotent group,
totally geodesic submanifold.\\
\end{minipage}
\end{center}

\section{Introduction} \label{ch1}

It is proved in \cite{RV} that the Gauss map of a submanifold in the Euclidean space is harmonic if and only if the
mean curvature field of this submanifold is parallel. There is a natural generalization of the Gauss map to
submanifolds in Lie groups: for each point of a submanifold the tangent space at this point is translated to the
identity element of the group (for the precise statement see Section~\ref{ch2}). Let the Lie group be endowed with some
left invariant metric. As it is proved in \cite{ES}, when this metric is biinvariant and the submanifold is
hypersurface, the Gauss map is harmonic if and only if the mean curvature is constant. Our aim is to consider more
general case of a submanifold in some Lie group with arbitrary codimension.

The paper is organized as follows. In Section~\ref{ch2} we obtain the harmonicity criterion for the Gauss map of a
submanifold in some Lie group with a left invariant metric. This criterion is given in the terms of the second
fundamental form of the immersion and the left invariant Riemannian connection on the Lie group (Theorem~\ref{th1}).

In Section~\ref{ch3} we consider submanifolds in Lie groups with biinvariant metric. Let us introduce some notation.
Let $N$ be a Lie group with biinvariant metric, $\mathcal{N}$ the Lie algebra of $N$, $M$ a smooth immersed totally
geodesic submanifold in $N$. Taking if necessary the left translation of $M$ assume that $e\in M$ (see
Section~\ref{ch3} for details). The tangent space $T_e M$ is a Lie triple system in $\mathcal{N}$. Denote by
$\overline{\mathcal{N}}$ the Lie subalgebra $T_e M+[T_e M,T_e M]$ of $\mathcal{N}$. By $\mathcal{W}$ denote the
orthogonal projection of $T_e M$ to the semisimple Lie subalgebra
$\overline{\mathcal{N}}'=[\overline{\mathcal{N}},\overline{\mathcal{N}}]$.

The subspace $\overline{\mathcal{W}}=\mathcal{W}\cap[\mathcal{W},\mathcal{W}]$ is an ideal (here by ideals we mean
ideals in $\overline{\mathcal{N}}$). Denote by $\mathcal{V}$ the orthogonal complement in $\overline{\mathcal{N}}'$ to
$\overline{\mathcal{W}}$. Let $\mathcal{V}=\bigoplus\limits_{1\leqslant l \leqslant m}\mathcal{S}_l$ be some direct
orthogonal decomposition of $\mathcal{V}$ into simple ideals. Using Theorem~\ref{th1} we prove

\setcounter{propcounter}{3} \addtocounter{propcounter}{-1}

\begin{theorem}
Let $M$ be a smooth immersed totally geodesic submanifold in a Lie group $N$ with biinvariant metric. Then
\begin{enumerate}

\item if the restriction of the metric to $\mathcal{V}$ is a negative multiple of the Killing form (in particular, if
$\mathcal{V}$ is simple), then the Gauss map of $M$ in this metric is harmonic;

\item if $\mathcal{W}\cap\mathcal{V}=\bigoplus\limits_{1\leqslant l \leqslant m}\mathcal{W}_l$, where
$\mathcal{W}_l\subset\mathcal{S}_l$ is a proper Lie triple system in $\mathcal{S}_l$, i.e., $\mathcal{W}_l \neq 0$ and
$\mathcal{W}_l \neq \mathcal{S}_l$ for each $1\leqslant l \leqslant m$ (in particular, if $\mathcal{V}=0$), then the
Gauss map of $M$ is harmonic in any biinvariant metric on $N$;

\item if the condition of \ref{pr2-it2} is not satisfied, then there is a biinvariant metric on $N$ such that the Gauss
map of $M$ is not harmonic.

\end{enumerate}
\end{theorem}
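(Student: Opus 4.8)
The plan is to specialize Theorem~\ref{th1} to the present situation and then to read the three alternatives off a single bracket identity. Write $\mathcal{P}=T_eM$, fix an orthonormal basis $e_1,\dots,e_k$ of $\mathcal{P}$, and let $(\cdot)^{\mathcal{P}}$ and $(\cdot)^{\perp}$ denote the tangential and normal components in $\mathcal{N}$. For a biinvariant metric the connection is $\nabla_XY=\tfrac12[X,Y]$ and parallel transport along $\exp(tX)$ is given by $\operatorname{Ad}(\exp(-\tfrac{t}{2}X))$, so that, translated back to $e$, the Gauss image of the geodesic $\exp(tX)$ is the curve $t\mapsto e^{-\frac{t}{2}\ad_X}\mathcal{P}$ in the Grassmannian. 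Because $M$ is totally geodesic its second fundamental form disappears from the criterion of Theorem~\ref{th1}, and I expect the covariant Hessian of this curve (computed from the symmetric-space formula giving the covariant acceleration of $c(t)=e^{tB}o$ as $[B_{\mathfrak{k}},B_{\mathfrak{p}}]$, where $B=B_{\mathfrak{k}}+B_{\mathfrak{p}}$ is the Cartan decomposition of $B=-\tfrac12\ad_X$) to reduce the tension field to
\begin{equation*}
\tau(Y)=\tfrac12\sum_i\bigl[e_i,[e_i,Y]^{\perp}\bigr]^{\perp},\qquad Y\in\mathcal{P}.
\end{equation*}
Since $\mathcal{P}$ is a Lie triple system one has $[e_i,[e_i,Y]]\in\mathcal{P}$; substituting this and pairing with a normal vector $W$ collapses the condition $\tau\equiv0$ to the bilinear criterion that
\begin{equation*}
\sum_i\bigl\langle[e_i,Y]^{\mathcal{P}},[e_i,W]^{\mathcal{P}}\bigr\rangle=0\qquad\text{for all }Y\in\mathcal{P},\ W\in\mathcal{P}^{\perp},
\end{equation*}
i.e.\ that the tangential part of the bracket couples $\mathcal{P}$ and $\mathcal{P}^{\perp}$ trivially.

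Next I would strip away the inert directions. All brackets lie in $\overline{\mathcal{N}}'$, central elements of $\overline{\mathcal{N}}$ act trivially, and the ideal $\overline{\mathcal{W}}\subseteq\mathcal{W}$ contributes nothing, since brackets of its elements stay inside $\mathcal{W}$ and so produce no $\mathcal{P}$--$\mathcal{P}^{\perp}$ coupling; this is precisely why $\overline{\mathcal{W}}$ was factored out. Hence the criterion localizes to $\mathcal{V}$ and to the Lie triple system $\mathcal{W}\cap\mathcal{V}$, and by the triple-system identity $\tau$ vanishes as soon as the even part $[\mathcal{W}\cap\mathcal{V},\mathcal{W}\cap\mathcal{V}]$ is $g$-orthogonal to the odd part $\mathcal{W}\cap\mathcal{V}$ inside $\mathcal{V}$. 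The involution $\theta$ of the standard enveloping algebra of $\mathcal{W}\cap\mathcal{V}$ (equal to $+1$ on the even part and $-1$ on $\mathcal{W}\cap\mathcal{V}$) is a Lie-algebra automorphism, so this orthogonality is equivalent to $\theta$ being a $g$-isometry. This single reformulation will drive the first two cases.

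For case (i) the metric on $\mathcal{V}$ is proportional to the Killing form, which is invariant under every automorphism; hence $\theta$ is a $g$-isometry, the even and odd parts are orthogonal, and $\tau=0$. For case (ii), alignment means $\theta$ preserves each simple ideal $\mathcal{S}_l$ together with $\mathcal{W}_l$; since any biinvariant metric restricts to a multiple of the Killing form on the \emph{simple} algebra $\mathcal{S}_l$, the involution is an isometry factor by factor, so $[\mathcal{W}_l,\mathcal{W}_l]\perp\mathcal{W}_l$ holds irrespective of the relative scalings between factors and the Gauss map is harmonic for every biinvariant metric. (The requirement $\mathcal{W}_l\neq0$ is automatic once $\mathcal{V}\neq0$, since $\mathcal{S}_l\subseteq\overline{\mathcal{N}}'$ is generated by the projection of $\mathcal{W}$; the case $\mathcal{V}=0$ is vacuous.)

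Finally, for case (iii) the failure of (ii) means that $\mathcal{W}\cap\mathcal{V}$ is not aligned with the ideal decomposition, so $\theta$ permutes the factors $\mathcal{S}_l$ nontrivially. I would then pick a biinvariant metric $g=\bigoplus_l c_l(-B_l)$ with the scalars $c_l$ chosen unequal on a pair of factors interchanged by $\theta$; such a $g$ is \emph{not} $\theta$-invariant, so the even and odd parts of $\mathcal{W}\cap\mathcal{V}$ fail to be $g$-orthogonal. The main obstacle is the converse implication: non-orthogonality must be promoted to $\tau\neq0$, which does not follow formally from the sufficient condition above. Here I would return to the explicit form $\tau(Y)=-\tfrac12\sum_i[e_i,(P_{\mathcal{P}}[e_i,Y])]^{\perp}$ and exhibit $Y\in\mathcal{W}\cap\mathcal{V}$ and $W\in\mathcal{W}^{\perp}$ with $\sum_i\langle P_{\mathcal{P}}[e_i,Y],P_{\mathcal{P}}[e_i,W]\rangle\neq0$, tracking how the unequal scalars $c_l$ prevent the factor-wise cancellations that saved cases (i) and (ii). Verifying that the surviving contribution is genuinely nonzero, rather than cancelling for accidental reasons, is the delicate point, and I expect it to use that a projection of a Lie triple system onto a single simple factor need not itself be a triple system, so its even and odd parts are not Killing-orthogonal there.
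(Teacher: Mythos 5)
Your reduction to the bilinear criterion $\sum_i\langle[e_i,Y]^{T},[e_i,W]^{T}\rangle=0$ and your treatment of cases (i) and (ii) match the paper in substance: the paper derives exactly this condition from its Theorem~1 for totally geodesic $M$, localizes it to $\widetilde{\mathcal{W}}=\mathcal{W}\cap\mathcal{V}$, and shows harmonicity follows once $\langle\widetilde{\mathcal{W}},[\widetilde{\mathcal{W}},\widetilde{\mathcal{W}}]\rangle=0$; your involution $\theta$ is a repackaging of the paper's observation that $\ad X$ for $X$ in the odd (resp.\ even) part swaps (resp.\ preserves) the grading, so mixed traces of the Killing form vanish. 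Two of your intermediate claims --- that failure of (ii) is equivalent to $\theta$ moving some simple factor, and that unequal scalars on a swapped pair destroy $\theta$-invariance --- are true but asserted rather than proved; the paper instead characterizes the failure of (ii) as the existence of $l_0$ with $\mathcal{S}_{l_0}=P_{l_0}(\widetilde{\mathcal{W}})=[P_{l_0}(\widetilde{\mathcal{W}}),P_{l_0}(\widetilde{\mathcal{W}})]$, which is the form its construction actually uses.

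The genuine gap is the one you name yourself in case (iii). Non-orthogonality of $\widetilde{\mathcal{W}}$ and $[\widetilde{\mathcal{W}},\widetilde{\mathcal{W}}]$ negates a \emph{sufficient} condition for harmonicity, not the harmonicity criterion itself, so your argument stops exactly where the real work begins. The paper closes this with a specific device you would need to reproduce: take the metric on $\mathcal{V}$ to be the negative Killing form plus $\lambda^2$ times the negative Killing form of the single bad factor $\mathcal{S}_{l_0}$. Since the unperturbed part makes the even and odd parts orthogonal, every tangential projection $[Y_i,Y_j]^T$ along the $\widetilde{\mathcal{W}}$-directions is contributed entirely by the perturbation, namely $-\lambda^2\sum_k\langle[P_{l_0}(X_i),P_{l_0}(X_k)],P_{l_0}(X_j)\rangle''\,Y_k$. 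One then chooses the tangent test vector and the normal test vector to have the \emph{same} projection to $\mathcal{S}_{l_0}$ (possible because $P_{l_0}(\widetilde{\mathcal{W}})=\mathcal{S}_{l_0}$ while $\mathcal{S}_{l_0}\not\subset\widetilde{\mathcal{W}}$), so that the harmonicity sum becomes
\begin{equation*}
\lambda^4\sum_{i,k}\bigl(\langle[P_{l_0}(X_i),P_{l_0}(X_k)],P_{l_0}(X)\rangle''\bigr)^2,
\end{equation*}
a sum of squares that is nonzero precisely because $[P_{l_0}(\widetilde{\mathcal{W}}),P_{l_0}(\widetilde{\mathcal{W}})]=\mathcal{S}_{l_0}$. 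Without this (or an equivalent) mechanism there is no control over cancellation in $\sum_i\langle[e_i,Y]^{T},[e_i,W]^{T}\rangle$, and your proof of (iii) is incomplete.
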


\setcounter{propcounter}{0}

In the paper \cite{Pe} we considered hypersurfaces in $2$-step nilpotent Lie groups and found conditions for the Gauss
maps of such hypersurfaces to be harmonic. In particular, we showed that, unlike the case of groups with biinvariant
metric, this harmonicity is not equivalent to the constancy of the mean curvature. As it was shown in \cite{E2},
totally geodesic submanifolds in such groups either have the Gauss map of maximal rank or are open subsets of subgroups
(and consequently have the constant Gauss map). In the latter case the structure of subalgebras corresponding to such
subgroups can be explicitly described (this description implies, in particular, that there are not totally geodesic
hypersurfaces in $2$-step nilpotent Lie groups, see \cite{E2}). Using our criterion, we prove in Section~\ref{ch4} that
the Gauss map of a geodesic in a $2$-step nilpotent Lie group is harmonic if and only if is constant
(Proposition~\ref{pr3}).

The author would thank prof. A. A. Borisenko and prof. L. A. Masal'tsev for their attention to this work and many
useful remarks. Also the author is grateful to the reviewer for essential improvement of both the results and the
presentation of the paper.

\section{The Harmonicity Criterion} \label{ch2}

Suppose $M$ is a smooth manifold, $\dim M = n$, $M \rightarrow N$ is an immersion of $M$ in some Lie group $N$ with
left invariant metric, $\dim N=n+q$. For some point $p$ of $M$ let $Y_1,\dots,Y_n$ and $Y_{n+1},\dots,Y_{n+q}$ be
orthonormal frames of tangent space $T_p M \subset T_p N$ and of normal space $N_p M \subset T_p N$, respectively. Also
by $Y_a$, $1 \leqslant a \leqslant n+q$, denote the corresponding left invariant fields on $N$.

Denote the left invariant metric on $N$ (and also the corresponding inner product on its Lie algebra) by $\langle
\cdot,\cdot \rangle$, the Riemannian connection of this metric by $\nabla$, its curvature tensor by
$\riemann(\cdot,\cdot)\cdot$, and the normal connection of the immersion $M \rightarrow N$ by $\nabla^\bot$.

Let $E_1, \ldots E_{n+q}$ be vector fields defined on some neighborhood $U$ of $p$ such that $E_i(p) = Y_i$, $E_1,
\dots , E_n$ and $E_{n+1}, \dots , E_{n+q}$ are orthonormal frames of the tangent and the normal bundles of $M$ on $U$,
respectively, and $\left(\nabla _{E_i}E_j\right)^T(p)=0$, for all $1 \leqslant i,j \leqslant n$. Then the {\itshape
mean curvature field} $H$ of the immersion is defined on $U$ by
\begin{equation}\label{in00}
\begin{array}{c}
H=\frac{1}{n}\sum\limits_{1 \leqslant i \leqslant n}\left(\nabla_{E_i}E_i \right)^\bot.\\
\end{array}
\end{equation}
Here $(\cdot)^T$ and $(\cdot)^\bot$ are the projections to the tangent bundle $TM$ and the normal bundle $NM$,
respectively.

For $1 \leqslant i,j \leqslant n$, $n+1 \leqslant \alpha \leqslant n+q$ by $b_{ij}^\alpha=\langle \nabla_{E_i}E_j,
E_\alpha \rangle$ denote the coefficients of the second fundamental form of the immersion on $U$ with respect to the
frame $E_1, \dots, E_{n+q}$. Suppose that on $U$ for $1\leqslant a \leqslant n+q$
\begin{equation}\label{in0}
\begin{array}{c}
E_a = \sum \limits_{1\leqslant b \leqslant n+q} A^b_a Y_b.\\
\end{array}
\end{equation}
Here $\{A^b_a\}_{1\leqslant a,b \leqslant n+q}$ are functions on $U$. Obviously, $A^b_a(p)=\delta_{ab}$, where
$\delta_{ab}$ is the Kronecker symbol.

Let $\Delta$ be the Laplacian $\Delta_M$ of the induced metric on $M$. The definition of the Laplacian and the
conditions $\left(\nabla _{E_i}E_j\right)^T(p)=0$ imply that for functions $f$ and $g$ defined on $U$
\begin{equation}\label{in1}
\begin{array}{c}
\Delta f(p)=\sum\limits_{1\leqslant i \leqslant n}E_iE_i(f),\\
\end{array}
\end{equation}
\begin{equation}\label{in2}
\begin{array}{c}
\Delta (fg)(p)=g(p)\Delta f(p)+2\sum\limits_{1\leqslant i \leqslant n}E_i(f)E_i(g)+f(p)\Delta g(p).\\
\end{array}
\end{equation}
Let $\Phi$ be the {\itshape Gauss map} of $M$:
\begin{equation}\label{in3}
\begin{array}{c}
\Phi\colon M \rightarrow G(n,q); \Phi(p)=dL_{p^{-1}}(T_p M).\\
\end{array}
\end{equation}
Here $G(n,q)$ is the Grassmannian of $n$-dimensional subspaces in $n+q$-dimensional vector space, a point $p$ is
identified with its image under the immersion, $L_g$ is the left translation by $g \in M$, $dF$ is the differential of
a map $F$.

Recall that if $(M_1,g_1)$ and $(M_2,g_2)$ are smooth Riemannian manifolds, then for any $\phi \in C^{\infty}(M_1,M_2)$
the {\itshape energy} of $\phi$ is
$$
\begin{array}{c}
E(\phi)=\frac{1}{2}\int\limits_{M_1} \sum\limits_{1 \leqslant i \leqslant m}g_2(d\phi( E_i), d\phi ( E_i)) dV_M,\\
\end{array}
$$
where $m= \dim M_1$, $E_1, \dots , E_m$ is the orthonormal frame on $M_1$, $dV_M$ is the volume form of $g_1$. The
critical points of the functional $\phi \mapsto E(\phi)$ are called {\itshape harmonic maps} from $M_1$ to $M_2$. We
say that a map is harmonic at some point if the corresponding Euler-Lagrange equations are satisfied at this point
(i.e., the so-called {\itshape tension field} vanishes, see, for example, \cite{Ur}).

\begin{theorem}\label{th1}
The map $\Phi$ is harmonic at $p$ if and only if
\begin{equation}\label{eq1}
\begin{array}{c}
\sum\limits_{1 \leqslant i \leqslant n}\langle \riemann(Y_j,Y_i)Y_i,Y_\alpha\rangle
-\sum\limits_{1\leqslant i \leqslant n}\langle\nabla_{\left(\nabla_{Y_i}Y_i\right)}Y_j, Y_\alpha \rangle
+\langle [nH,Y_j],Y_\alpha\rangle\\
+2\sum\limits_{1\leqslant i,k \leqslant n}b_{ik}^\alpha\langle\nabla_{Y_i}Y_k, Y_j \rangle+2\sum\limits_{1 \leqslant i
\leqslant n,n+1\leqslant \gamma\leqslant n+q}b^\gamma_{ij}\langle\nabla_{Y_i}Y_\gamma,Y_\alpha\rangle\\
-\sum\limits_{1\leqslant i\leqslant n}\langle\left(\nabla_{Y_i}Y_j\right)^T,\left(\nabla_{Y_i}Y_\alpha\right)^T
\rangle+\sum\limits_{1\leqslant i\leqslant n}\langle\left(\nabla_{Y_i}Y_j\right)^\bot,
\left(\nabla_{Y_i}Y_\alpha\right)^\bot \rangle=0\\
\end{array}
\end{equation}
for all $1\leqslant j\leqslant n$, $n+1\leqslant \alpha\leqslant n+q$.
\end{theorem}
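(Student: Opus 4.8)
The plan is to compute the tension field $\tau(\Phi)$ at $p$ in a convenient chart on $G(n,q)$ and show that its vanishing is equivalent to \eqref{eq1}. Since the $Y_b$ and the metric are left invariant, for $p'\in U$ the plane $\Phi(p')=dL_{p'^{-1}}(T_{p'}M)$ is spanned by the vectors $\sum_b A_j^b(p')\,Y_b(e)\in\mathcal N$, $1\le j\le n$. I would coordinatize $G(n,q)$ near $\Phi(p)$ by the affine chart in which a plane close to $P_0=\spann(Y_1,\dots,Y_n)$ is the graph of a linear map $P_0\to P_0^\bot$; solving for the graph of the spanning frame gives the coordinate functions $L_j^\alpha=\sum_{1\le i\le n}(A^{-1})^j_i\,A_i^\alpha$, where $A=(A_i^j)_{1\le i,j\le n}$ is the (near $p$ invertible) tangential block and $L(p)=0$. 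The structural point I would use is that this chart agrees with the Riemannian normal coordinates of the canonical invariant metric at $P_0$ up to second order (both come from the symmetric-space exponential and differ only at cubic order). Because $L(p)=0$, a function that is $O(|L|^3)$ has vanishing second derivatives along $\Phi$ at $p$, so the Christoffel contribution to $\tau(\Phi)$ drops out and $\tau(\Phi)(p)$ has components $(\Delta_M L_j^\alpha)(p)$. Hence $\Phi$ is harmonic at $p$ if and only if $(\Delta_M L_j^\alpha)(p)=0$ for all $j,\alpha$.

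Next I would reduce this Laplacian to derivatives of the $A_j^b$. Using $A_i^j(p)=\delta_{ij}$, $A_i^\alpha(p)=0$, the identity $d(A^{-1})=-A^{-1}(dA)A^{-1}$, and \eqref{in1}--\eqref{in2}, one obtains $(\Delta_M L_j^\alpha)(p)=\sum_i E_iE_i(A_j^\alpha)(p)-2\sum_{i,k}E_k(A_j^i)(p)\,E_k(A_i^\alpha)(p)$. The first derivatives are read off from $A_j^b=\langle E_j,Y_b\rangle$: the geodesic-frame condition $(\nabla_{E_i}E_j)^T(p)=0$ and the definition of the second fundamental form give $E_k(A_j^i)(p)=-\langle\nabla_{Y_k}Y_j,Y_i\rangle$ and $E_k(A_j^\alpha)(p)=b_{kj}^\alpha-\langle\nabla_{Y_k}Y_j,Y_\alpha\rangle$. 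Substituting these into the bilinear term produces combinations of $b_{ik}^\alpha$ with $\langle\nabla_{Y_i}Y_k,Y_j\rangle$ and of $\langle\nabla_{Y_k}Y_j,Y_i\rangle$ with $\langle\nabla_{Y_k}Y_i,Y_\alpha\rangle$ which, after using the skew-symmetry $\langle\nabla_{Y_i}Y_j,Y_k\rangle=-\langle\nabla_{Y_i}Y_k,Y_j\rangle$ of the connection in the orthonormal left-invariant frame and the symmetry $b_{ik}^\alpha=b_{ki}^\alpha$, match the corresponding first-order summands of \eqref{eq1}.

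For the second-derivative term I would write $A_j^\alpha=\langle E_j,Y_\alpha\rangle$ and differentiate twice using metric compatibility, obtaining at $p$ the three pieces $\sum_i\langle\nabla_{E_i}\nabla_{E_i}E_j,Y_\alpha\rangle$, $2\sum_i\langle\nabla_{E_i}E_j,\nabla_{Y_i}Y_\alpha\rangle$, and $\sum_i\langle Y_j,\nabla_{Y_i}\nabla_{Y_i}Y_\alpha\rangle$. Inserting $\nabla_{E_i}E_j(p)=\sum_c E_i(A_j^c)(p)Y_c+\nabla_{Y_i}Y_j$ into the middle piece, and combining with the bilinear correction above via the same skew-symmetry, yields the mixed term $2\sum b_{ij}^\gamma\langle\nabla_{Y_i}Y_\gamma,Y_\alpha\rangle$ and the tangential and normal inner-product terms of \eqref{eq1}. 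For the rough-Laplacian piece I would use torsion-freeness $\nabla_{E_i}E_j=\nabla_{E_j}E_i+[E_i,E_j]$ and the definition of $\riemann$ to write
\[
\sum_i\nabla_{E_i}\nabla_{E_i}E_j=\sum_i\riemann(E_i,E_j)E_i+\nabla_{E_j}\Big(\sum_i\nabla_{E_i}E_i\Big)+\sum_i\big(\nabla_{[E_i,E_j]}E_i+\nabla_{E_i}[E_i,E_j]\big).
\]
Projecting onto $Y_\alpha$ at $p$, the curvature sum gives $\sum_i\langle\riemann(Y_j,Y_i)Y_i,Y_\alpha\rangle$ (up to the sign convention for $\riemann$); and since $(\sum_i\nabla_{E_i}E_i)(p)=nH(p)$ by the frame condition, the remaining terms regroup, using torsion-freeness and the left invariance of $Y_j$ and $Y_\alpha$, into $\langle[nH,Y_j],Y_\alpha\rangle$ and $-\sum_i\langle\nabla_{(\nabla_{Y_i}Y_i)}Y_j,Y_\alpha\rangle$.

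The main obstacle is precisely this last regrouping: although $(\sum_i\nabla_{E_i}E_i)^T(p)=0$, its $E_j$-derivative does not vanish, and the fields $E_a$ are not left invariant, so the brackets $[E_i,E_j]$ and the derivatives $\nabla_{E_i}[E_i,E_j]$ must be re-expressed through the constant structure constants and connection coefficients of the left-invariant frame together with the first derivatives of the $A_j^b$. The cleanest bookkeeping is to expand every $\nabla_{E_a}E_b$ in the left-invariant frame from the outset, so that all curvature components, brackets, and $\nabla_{Y_a}Y_b$ are constant functions and only the $A_j^b$ carry derivatives; then each contribution is a constant coefficient times a value or a first/second derivative of $A$ at $p$, and collecting them—using the first-derivative identities to turn the surviving products into the mean-curvature bracket and connection combinations—reproduces exactly the left-hand side of \eqref{eq1}. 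Imposing $(\Delta_M L_j^\alpha)(p)=0$ for all $1\le j\le n$, $n+1\le\alpha\le n+q$ then gives the criterion.
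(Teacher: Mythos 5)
Your proposal is correct and it reaches the paper's key intermediate identity, but by a genuinely different first step. The paper realizes $G(n,q)$ isometrically inside the space of symmetric matrices via $A\mapsto AEA^{t}$ (Kobayashi's embedding) and invokes the harmonicity criterion for maps into a submanifold of Euclidean space, which requires computing the second fundamental form $B$ of that embedding; after discarding the tangential--tangential and normal--normal blocks this yields the condition $\Delta A^j_\alpha+2\sum_{i,\gamma}E_i(A^j_\gamma)E_i(A^\alpha_\gamma)=0$, i.e.\ \eqref{eq1-pr2}. You instead work intrinsically in the affine graph chart $L^\alpha_j=\sum_i(A^{-1})^j_iA^\alpha_i$ and use that this chart osculates normal coordinates to second order, so the Christoffel correction to the tension field vanishes at $p$ and the criterion is $\Delta_M L^\alpha_j(p)=0$; expanding with \eqref{in2}, $A(p)=I$, and the orthogonality relation $\sum_b A^b_jA^b_\alpha=0$ shows your condition is exactly $(-1)$ times \eqref{eq1-pr2}, so the two routes merge there. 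Your route trades the computation of $B$ for the single geometric fact that symmetric-space geodesics through $P_0=\spann(Y_1,\dots,Y_n)$ are cubically tangent to lines in the graph chart (after diagonalizing by principal angles the geodesic is a $\tan(t\Sigma)$-graph, and $\tan(t\Sigma)=t\Sigma+O(t^3)$); that fact is true and standard, but it is the load-bearing step replacing the paper's explicit evaluation of $B(\partial/\partial A^j_\alpha,\partial/\partial A^k_\beta)$ and deserves the one-line proof just indicated rather than a bare assertion. From \eqref{eq1-pr2} onward your plan --- the first-derivative identities $E_i(A^k_j)=-\langle\nabla_{Y_i}Y_j,Y_k\rangle$ and $E_i(A^\gamma_j)=b^\gamma_{ij}-\langle\nabla_{Y_i}Y_j,Y_\gamma\rangle$, the commutation producing $\riemann(Y_j,Y_i)Y_i$ and $\sum_i\nabla_{E_i}E_i=nH$ at $p$, and the regrouping into $\langle[nH,Y_j],Y_\alpha\rangle-\sum_i\langle\nabla_{(\nabla_{Y_i}Y_i)}Y_j,Y_\alpha\rangle$ --- coincides with the paper's own computation; you have not carried out the final bookkeeping, but the terms you list are the correct ones and the cancellations you anticipate are the ones that actually occur.
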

\begin{proof}
The Grassmannian has the structure of the symmetric space $G(n,q)=O(n+q)/\left(O(n) \times O(q)\right)$. There is an
embedding of this space in the space of symmetric matrices of order $n+q$ considered with the obvious Euclidean metric
(\cite{Ko}). This embedding is induced by the map $A \mapsto AEA^{t}$, where $A \in O(n+q)$, $A^{t}$ is $A$ transposed,
and
$$
\begin{array}{c}
E=\left(
\begin{array}{cc}
-\frac{q}{n+q}I_n & 0 \\
0 & \frac{n}{n+q}I_q \\
\end{array}
\right).\\
\end{array}
$$
Here $I_n$ and $I_q$ are the identity matrices of order $n$ and $q$, respectively. The image of $\Phi$ on $U$
corresponds to the matrix $A=(A^b_a)_{1\leqslant a,b \leqslant n+q}$, where $A^b_a$ are the functions from \eqref{in0}.
The composition of $\Phi$ and the embedding gives the map defined on $U$ by
\begin{equation}\label{eq1-pr1-1}
\begin{array}{c}
\left(
\begin{array}{cc}
-\frac{q}{n+q}I_n+\left(\sum\limits_{n+1\leqslant \gamma \leqslant
n+q}A^j_\gamma A^k_\gamma \right) &
\left(\sum\limits_{n+1\leqslant \gamma \leqslant n+q}A^j_\gamma
A^\beta_\gamma \right)\\
\left(\sum\limits_{n+1\leqslant \gamma \leqslant
n+q}A^\alpha_\gamma A^k_\gamma \right) & \frac{n}{n+q}I_q-
\left(\sum\limits_{1\leqslant l \leqslant n}A^\alpha_l
A^\beta_l \right)\\
\end{array}
\right),\\
\end{array}
\end{equation}
where $1\leqslant j,k \leqslant n$, $n+1 \leqslant \alpha,\beta \leqslant n+q$. Differentiate $E_a$ with respect to
$E_i$ on $U$ for $1\leqslant a \leqslant n+q$, $1\leqslant i\leqslant n$:
\begin{equation}\label{eq1-pr3}
\begin{array}{c}
\nabla_{E_i}E_a=\sum\limits_{1 \leqslant b \leqslant n+q}E_i(A^b_a)Y_b+ \sum\limits_{1 \leqslant b \leqslant
n+q}A^b_a\nabla_{E_i}Y_b.\\
\end{array}
\end{equation}
In particular, at $p$
\begin{equation}\label{eq1-pr4}
\begin{array}{c}
\nabla_{Y_i}E_a=\sum\limits_{1 \leqslant b \leqslant n+q}E_i(A^b_a)Y_b+ \nabla_{Y_i}Y_a.\\
\end{array}
\end{equation}
Note that $E_i(A^b_a)=-E_i(A^a_b)$ (this can be derived from \eqref{eq1-pr4} or simply from the fact that
$\mathfrak{so}(n+q)$ is the algebra of skew-symmetric matrices).

According to Theorem~(2.22) in Chapter~4 of \cite{Ur}, the criterion of the harmonicity of $\Phi$ is the set of
equations
\begin{equation}\label{eq1-pr1-2}
\begin{array}{c}
\Delta \Phi^a_b-\left( \sum\limits_{1\leqslant i\leqslant n}B(d\Phi(E_i),d\Phi(E_i)) \right)^a_b=0.\\
\end{array}
\end{equation}
Here $1\leqslant a \leqslant b \leqslant n+q$, $\Phi^a_b$ are the coordinate functions of the embedding, and $B$ is the
second fundamental form of the embedding\footnote{Actually, the sign of $B$ in \cite{Ur} is different because the
Laplacian in this book is defined with the opposite sign.}. The fields $\{\frac{\partial}{\partial
A^j_\alpha}\}_{1\leqslant j \leqslant n,n+1\leqslant \alpha \leqslant n+q}$ form the frame of $TG(n,q)$ on the image of
$U$ (note that $\frac{\partial}{\partial A^\alpha_j}=-\frac{\partial}{\partial A^j_\alpha}$). Denote by $C^a_b$ for $1
\leqslant a \leqslant b \leqslant n+q$ the matrix with the entry $1$ at the intersection of $a-$th row and $b-$th
column and with other entries equal to $0$. The differential of the embedding at $p$ maps the field
$\frac{\partial}{\partial A^j_\alpha}$ to the vector $C^j_\alpha$. It follows that we can take as a frame of the normal
space of the Grassmannian at the image of this point the vectors $C^i_j$, $1\leqslant i\leqslant j \leqslant n$ and
$C^\alpha_\beta$, $n+1\leqslant \alpha\leqslant \beta \leqslant n+q$. The expressions \eqref{eq1-pr1-1} imply on $U$
for $1\leqslant l\leqslant m \leqslant n$, $n+1\leqslant \gamma\leqslant \kappa \leqslant n+q$
$$
\begin{array}{c}
\left(\frac{\partial}{\partial
A^j_\alpha}\right)^l_m=\delta_{lj}A^m_\alpha+\delta_{mj}A^l_\alpha,\left(\frac{\partial}{\partial
A^j_\alpha}\right)^\gamma_\kappa=\delta_{\gamma\alpha}A^\kappa_j+\delta_{\kappa\alpha}A^\gamma_j.\\
\end{array}
$$
Differentiate these equations:
$$
\begin{array}{c}
B\left(\frac{\partial}{\partial A^j_\alpha},\frac{\partial}{\partial A^k_\beta}\right)=\sum\limits_{1\leqslant
l\leqslant m \leqslant n}\frac{\partial}{\partial A^k_\beta}\left(\frac{\partial}{\partial A^j_\alpha}\right)^l_m
C^l_m\\
+\sum\limits_{n+1\leqslant \gamma\leqslant \kappa \leqslant n+q}\frac{\partial}{\partial
A^k_\beta}\left(\frac{\partial}{\partial A^j_\alpha}\right)^\gamma_\kappa C^\gamma_\kappa=\delta_{\alpha
\beta}(1+\delta_{jk})C^j_k-\delta_{jk}(1+\delta_{\alpha \beta})C^\alpha_\beta.\\
\end{array}
$$
for $1\leqslant j \leqslant k \leqslant n$, $n+1\leqslant \alpha \leqslant \beta \leqslant n+q$. Also note that
$$
\begin{array}{c}
d\Phi(E_i)=\sum\limits_{1\leqslant j \leqslant n,n+1\leqslant \alpha \leqslant
n+q}E_i(A^j_\alpha)\frac{\partial}{\partial A^j_\alpha}.\\
\end{array}
$$
This implies that at $p$ for $1\leqslant j \leqslant k \leqslant n$ the expressions in \eqref{eq1-pr1-2} take the form
$$
\begin{array}{c}
\Delta \left(-\frac{q}{n+q}\delta_{jk}+\sum\limits_{n+1\leqslant \gamma \leqslant n+q}A^j_\gamma A^k_\gamma
\right)-\left( \sum\limits_{1\leqslant i\leqslant n}B(d\Phi(E_i),d\Phi(E_i)) \right)^j_k\\
=2\sum\limits_{1\leqslant i \leqslant n,n+1\leqslant \gamma \leqslant n+q}E_i(A^j_\gamma)
E_i(A^k_\gamma)-2\sum\limits_{1\leqslant i \leqslant n,n+1\leqslant \gamma \leqslant n+q}E_i(A^j_\gamma)
E_i(A^k_\gamma)=0.\\
\end{array}
$$
Here the equation \eqref{in2} was used. Similarly, for $n+1\leqslant \alpha \leqslant \beta \leqslant n+q$ obtain
$$
\begin{array}{c}
\Delta \left( \frac{n}{n+q}\delta_{\alpha\beta}- \sum\limits_{1\leqslant l \leqslant n}A^\alpha_l
A^\beta_l\right)-\left( \sum\limits_{1\leqslant i\leqslant n}B(d\Phi(E_i),d\Phi(E_i)) \right)^\alpha_\beta\\
=-2\sum\limits_{1\leqslant i,l \leqslant n}E_i(A^\alpha_l)E_i(A^\beta_l) +2\sum\limits_{1\leqslant i,l \leqslant
n}E_i(A^\alpha_l)E_i(A^\beta_l)=0.\\
\end{array}
$$
It follows that the conditions \eqref{eq1-pr1-2} at $p$ become
$$
\begin{array}{c}
\Delta\left(\sum\limits_{n+1\leqslant \gamma \leqslant n+q}A^j_\gamma A^\beta_\gamma \right)=0\\
\end{array}
$$
for $1\leqslant j \leqslant n, \, n+1\leqslant \beta \leqslant n+q$. The differentiation gives
\begin{equation}\label{eq1-pr2}
\begin{array}{c}
\Delta A^j_\alpha+2\sum\limits_{1 \leqslant i \leqslant n, n+1
\leqslant \gamma \leqslant
n+q}E_i(A^j_\gamma)E_i(A^\alpha_\gamma)=0;\\
\quad 1\leqslant j \leqslant n, \, n+1\leqslant \alpha \leqslant n+q.\\
\end{array}
\end{equation}
Differentiate \eqref{eq1-pr3} with respect to $E_i$ on $U$ for $n+1 \leqslant a=\alpha \leqslant n+q$:
\begin{equation}\label{eq1-pr6}
\begin{array}{c}
\nabla_{E_i}\nabla_{E_i}E_\alpha=\sum\limits_{1\leqslant j
\leqslant n}E_iE_i(A^j_\alpha)Y_j+ \sum\limits_{n+1\leqslant \beta
\leqslant n+q}E_iE_i(A^\beta_\alpha)Y_\beta
\\+2\sum\limits_{1\leqslant j \leqslant n}E_i(A^j_\alpha)\nabla_{E_i}Y_j
+2\sum\limits_{n+1\leqslant \beta \leqslant n+q}E_i(A^\beta_\alpha)\nabla_{E_i}Y_\beta\\
+\sum\limits_{1\leqslant j \leqslant n}A^j_\alpha\nabla_{E_i}\nabla_{E_i}Y_j+ \sum\limits_{n+1\leqslant \beta \leqslant
n+q}A^\beta_\alpha\nabla_{E_i}\nabla_{E_i}Y_\beta.
\end{array}
\end{equation}
Take the inner product of \eqref{eq1-pr6} with $Y_j$ at $p$:
$$
\begin{array}{c}
E_iE_i(A^j_\alpha)=\langle \nabla_{E_i}\nabla_{E_i}E_\alpha, Y_j \rangle-2\sum\limits_{1\leqslant k \leqslant
n}E_i(A^k_\alpha)\langle\nabla_{E_i}Y_k, Y_j \rangle\\
-2\sum\limits_{n+1\leqslant \gamma \leqslant n+q}E_i(A^\gamma_\alpha)\langle\nabla_{E_i}Y_\gamma, Y_j \rangle- \langle
\nabla_{E_i}\nabla_{E_i}Y_\alpha, Y_j \rangle.\\
\end{array}
$$
Therefore \eqref{eq1-pr2} takes the form
\begin{equation}\label{eq1-pr7}
\begin{array}{c}
\sum\limits_{1\leqslant i \leqslant n}\langle
\nabla_{E_i}\nabla_{E_i}E_\alpha, Y_j
\rangle-2\sum\limits_{1\leqslant i,k \leqslant
n}E_i(A^k_\alpha)\langle\nabla_{E_i}Y_k, Y_j \rangle\\
-2\sum\limits_{1\leqslant i \leqslant n,n+1\leqslant \gamma
\leqslant n+q }E_i(A^\gamma_\alpha)\langle\nabla_{E_i}Y_\gamma,
Y_j \rangle-\sum\limits_{1\leqslant i \leqslant
n} \langle \nabla_{E_i}\nabla_{E_i}Y_\alpha, Y_j \rangle\\
+2\sum\limits_{1 \leqslant i \leqslant n, n+1\leqslant \gamma
\leqslant n+q}E_i(A^j_\gamma)E_i(A^\alpha_\gamma)=0.
\end{array}
\end{equation}
Here \eqref{in1} was used. The definition \eqref{in00} of the mean curvature field implies for $1 \leqslant j \leqslant
n$, $n+1 \leqslant \alpha \leqslant n+q$ at $p$
$$
\begin{array}{c}
\langle \nabla_{Y_j}(nH),Y_\alpha\rangle=\sum\limits_{1 \leqslant i \leqslant n}\langle \nabla_{E_j}\left(
\left(\nabla_{E_i}E_i \right)^\bot \right),E_\alpha\rangle=\sum\limits_{1 \leqslant i \leqslant n}\langle
\nabla_{E_j}\nabla_{E_i}E_i,E_\alpha\rangle\\
-\sum\limits_{1 \leqslant i \leqslant n}\langle \nabla_{E_j}\left( \left(\nabla_{E_i}E_i \right)^T
\right),E_\alpha\rangle= \sum\limits_{1 \leqslant i \leqslant n}\langle
\riemann(E_j,E_i)E_i+\nabla_{E_i}\nabla_{E_j}E_i\\
+\nabla_{[E_j,E_i]}E_i,E_\alpha\rangle -\sum\limits_{1 \leqslant i \leqslant n}E_j\langle \left(\nabla_{E_i}E_i
\right)^T ,E_\alpha\rangle+\sum\limits_{1 \leqslant i \leqslant n}\langle \left(\nabla_{E_i}E_i \right)^T
,\nabla_{E_j}E_\alpha\rangle\\
=\sum\limits_{1 \leqslant i \leqslant n}\langle \riemann(Y_j,Y_i)Y_i,Y_\alpha\rangle +\sum\limits_{1 \leqslant i
\leqslant n}\langle \nabla_{E_i}\nabla_{E_j}E_i,E_\alpha\rangle=\sum\limits_{1 \leqslant i \leqslant n}\langle
\riemann(Y_j,Y_i)Y_i,Y_\alpha\rangle\\
+\sum\limits_{1 \leqslant i \leqslant n}\langle \nabla_{E_i}[E_j,E_i],E_\alpha\rangle +\sum\limits_{1 \leqslant i
\leqslant n}\langle \nabla_{E_i}\nabla_{E_i}E_j,E_\alpha\rangle =\sum\limits_{1 \leqslant i \leqslant n}\langle
\riemann(Y_j,Y_i)Y_i,Y_\alpha\rangle\\
+\sum\limits_{1 \leqslant i \leqslant n}E_i\langle [E_j,E_i],E_\alpha\rangle -\sum\limits_{1 \leqslant i \leqslant
n}\langle [E_j,E_i],\nabla_{E_i}E_\alpha\rangle+\sum\limits_{1 \leqslant i \leqslant n}\langle
\nabla_{E_i}\nabla_{E_i}E_j,E_\alpha\rangle\\
=\sum\limits_{1 \leqslant i \leqslant n}\langle \riemann(Y_j,Y_i)Y_i,Y_\alpha\rangle+\sum\limits_{1 \leqslant i
\leqslant n}\langle \nabla_{E_i}\nabla_{E_i}E_j,E_\alpha\rangle.\\
\end{array}
$$
In the third equality the definition of the curvature tensor was used. The fourth equality follows from the Frobenius
theorem, the condition $\left(\nabla_{E_i} E_j\right)^T(p)=0$, and its consequence
$$
\begin{array}{c}
[E_k,E_i](p)=\left([E_k,E_i]\right)^{T}(p)=\left(\nabla_{E_k}E_i-\nabla_{E_i}E_k\right)^{T}(p)=0.\\
\end{array}
$$
Differentiate two times the expression $\langle E_j,E_\alpha \rangle=0$ with respect to $E_i$:
$$
\begin{array}{c}
\langle \nabla_{E_i}\nabla_{E_i}E_j,E_\alpha\rangle+2\langle \nabla_{E_i}E_j,\nabla_{E_i}E_\alpha\rangle+\langle
E_j,\nabla_{E_i}\nabla_{E_i}E_\alpha\rangle=0.\\
\end{array}
$$
This equation and \eqref{eq1-pr4} imply
$$
\begin{array}{c}
\langle \nabla_{Y_j}(nH),Y_\alpha\rangle=\sum\limits_{1 \leqslant i \leqslant n}\langle
\riemann(Y_j,Y_i)Y_i,Y_\alpha\rangle-2\sum\limits_{1 \leqslant i \leqslant n}\langle\nabla_{E_i}E_j
,\nabla_{E_i}E_\alpha\rangle\\
-\sum\limits_{1 \leqslant i \leqslant n}\langle E_j,\nabla_{E_i}\nabla_{E_i}E_\alpha\rangle =\sum\limits_{1 \leqslant i
\leqslant n}\langle \riemann(Y_j,Y_i)Y_i,Y_\alpha\rangle\\
-2\sum\limits_{1 \leqslant i \leqslant n,n+1\leqslant \gamma\leqslant n+q}b^\gamma_{ij}\left(E_i(A^\gamma_\alpha)
+\langle\nabla_{Y_i}Y_\alpha,Y_\gamma\rangle\right)-\sum\limits_{1 \leqslant i \leqslant n}\langle
E_j,\nabla_{E_i}\nabla_{E_i}E_\alpha\rangle.\\
\end{array}
$$
From \eqref{eq1-pr4} and the condition $\left(\nabla_{E_i} E_j\right)^T(p)=0$ obtain
\begin{equation}\label{eq1-pr8}
\begin{array}{c}
b^\gamma_{ij}=E_i(A^\gamma_j)+\langle \nabla_{Y_i}Y_j,Y_\gamma \rangle,\\
\end{array}
\end{equation}
\begin{equation}\label{eq1-pr9}
\begin{array}{c}
0=E_i(A^k_j)+\langle \nabla_{Y_i}Y_j,Y_k \rangle.\\
\end{array}
\end{equation}
Hence at $p$
\begin{equation}\label{eq1-pr10}
\begin{array}{c}
\langle \nabla_{Y_j}(nH),Y_\alpha\rangle=\sum\limits_{1 \leqslant i \leqslant n}\langle
\riemann(Y_j,Y_i)Y_i,Y_\alpha\rangle\\
+2\sum\limits_{1 \leqslant i \leqslant n,n+1\leqslant
\gamma\leqslant
n+q}b^\gamma_{ij}\langle\nabla_{Y_i}Y_\gamma,Y_\alpha\rangle\\
-2\sum\limits_{1 \leqslant i \leqslant
n,n+1\leqslant \gamma\leqslant n+q}\langle \nabla_{Y_i}Y_j,Y_\gamma \rangle E_i(A^\gamma_\alpha)\\
-\sum\limits_{1 \leqslant i \leqslant n}\langle
E_j,\nabla_{E_i}\nabla_{E_i}E_\alpha\rangle -2\sum\limits_{1
\leqslant i \leqslant n,n+1\leqslant \gamma\leqslant
n+q}E_i(A^\gamma_j)E_i(A^\gamma_\alpha).\\
\end{array}
\end{equation}
The equation \eqref{eq1-pr8} implies
$$
\begin{array}{c}
\sum\limits_{1\leqslant i,k \leqslant n}E_i(A^k_\alpha)\langle\nabla_{E_i}Y_k, Y_j \rangle=-\sum\limits_{1\leqslant i,k
\leqslant n}E_i(A_k^\alpha)\langle\nabla_{E_i}Y_k, Y_j \rangle\\
=-\sum\limits_{1\leqslant i,k \leqslant n}b_{ik}^\alpha\langle\nabla_{Y_i}Y_k, Y_j \rangle+\sum\limits_{1\leqslant i,k
\leqslant n}\langle \nabla_{Y_i}Y_k,Y_\alpha\rangle\langle\nabla_{Y_i}Y_k, Y_j \rangle.\\
\end{array}
$$
Note that for each pair of left invariant fields $X$ and $Y$ the product $\langle X,Y\rangle$ is constant, hence,
\begin{equation}\label{eq1-pr10-0}
\begin{array}{c}
\langle \nabla_Z X,Y\rangle=Z\left(\langle X,Y\rangle\right)-\langle X,\nabla_Z Y\rangle=-\langle X,\nabla_Z Y\rangle\\
\end{array}
\end{equation}
for every vector $Z$. This and the fact that the frame is orthonormal imply
$$
\begin{array}{c}
\sum\limits_{1\leqslant i,k \leqslant n}\langle \nabla_{Y_i}Y_k,Y_\alpha\rangle\langle\nabla_{Y_i}Y_k, Y_j
\rangle=\sum\limits_{1\leqslant i,k \leqslant n}\langle \nabla_{Y_i}Y_j,Y_k\rangle\langle\nabla_{Y_i}Y_\alpha, Y_k
\rangle\\
=\sum\limits_{1\leqslant i\leqslant n}\langle\left(\nabla_{Y_i}Y_j\right)^T,\left(\nabla_{Y_i}Y_\alpha\right)^T
\rangle.\\
\end{array}
$$
Thus,
\begin{equation}\label{eq1-pr10-1}
\begin{array}{c}
\sum\limits_{1\leqslant i,k \leqslant n}E_i(A^k_\alpha)\langle\nabla_{E_i}Y_k, Y_j \rangle=-\sum\limits_{1\leqslant i,k
\leqslant n}b_{ik}^\alpha\langle\nabla_{Y_i}Y_k, Y_j \rangle\\
+\sum\limits_{1\leqslant i\leqslant n}
\langle\left(\nabla_{Y_i}Y_j\right)^T,\left(\nabla_{Y_i}Y_\alpha\right)^T \rangle.\\
\end{array}
\end{equation}
Substituting \eqref{eq1-pr10} in \eqref{eq1-pr7} and taking into account \eqref{eq1-pr10-1} derive the conditions
\begin{equation}\label{eq1-pr11}
\begin{array}{c}
\sum\limits_{1 \leqslant i \leqslant n}\langle \riemann(Y_j,Y_i)Y_i,Y_\alpha\rangle-\langle
\nabla_{Y_j}(nH),Y_\alpha\rangle+ 2\sum\limits_{1\leqslant i,k \leqslant n}b_{ik}^\alpha\langle\nabla_{Y_i}Y_k, Y_j
\rangle\\
+2\sum\limits_{1 \leqslant i \leqslant n,n+1\leqslant
\gamma\leqslant
n+q}b^\gamma_{ij}\langle\nabla_{Y_i}Y_\gamma,Y_\alpha\rangle-\sum\limits_{1\leqslant
i \leqslant n}
\langle \nabla_{E_i}\nabla_{E_i}Y_\alpha, Y_j \rangle\\
-2\sum\limits_{1\leqslant i\leqslant n}\langle\left(\nabla_{Y_i}Y_j\right)^T,\left(\nabla_{Y_i}Y_\alpha\right)^T
\rangle=0.\\
\end{array}
\end{equation}
At $p$ for $1 \leqslant i,j \leqslant n$, $n+1 \leqslant \alpha \leqslant n+q$ obtain
$$
\begin{array}{c}
\langle \nabla_{E_i}\nabla_{E_i}Y_\alpha, Y_j \rangle=\langle \nabla_{E_i}\left(\sum\limits_{1\leqslant a \leqslant
n+q}A_i^a\nabla_{Y_a}Y_\alpha\right), Y_j \rangle=\sum\limits_{1\leqslant k \leqslant
n}E_i(A_i^k)\langle\nabla_{Y_k}Y_\alpha, Y_j \rangle\\
+ \sum\limits_{n+1\leqslant \gamma \leqslant n+q}E_i(A_i^\gamma)\langle\nabla_{Y_\gamma}Y_\alpha, Y_j \rangle+\langle
\nabla_{Y_i}\nabla_{Y_i}Y_\alpha, Y_j \rangle.\\
\end{array}
$$
Substitute into this \eqref{eq1-pr8} and \eqref{eq1-pr9} and use the definition of the mean curvature $\langle
nH,E_\gamma\rangle=\sum\limits_{1 \leqslant i \leqslant n}b_{ii}^\gamma$. Then use \eqref{eq1-pr10-0}:
$$
\begin{array}{c}
\sum\limits_{1\leqslant i \leqslant n}\langle \nabla_{E_i}\nabla_{E_i}Y_\alpha, Y_j \rangle=-\sum\limits_{1\leqslant i
\leqslant n,1\leqslant a \leqslant n+q}\langle\nabla_{Y_i}Y_i,Y_a\rangle\langle\nabla_{Y_a}Y_\alpha, Y_j \rangle\\
+ \sum\limits_{n+1\leqslant \gamma \leqslant n+q}\langle nH, Y_\gamma\rangle\langle\nabla_{Y_\gamma}Y_\alpha, Y_j
\rangle+\sum\limits_{1\leqslant i \leqslant n}\langle \nabla_{Y_i}\nabla_{Y_i}Y_\alpha, Y_j \rangle\\
=\sum\limits_{1\leqslant i \leqslant n,1\leqslant a \leqslant
n+q}\langle\nabla_{Y_i}Y_i,Y_a\rangle\langle\nabla_{Y_a}Y_j, Y_\alpha \rangle\\
- \sum\limits_{n+1\leqslant \gamma \leqslant n+q}\langle nH, Y_\gamma\rangle\langle\nabla_{Y_\gamma}Y_j, Y_\alpha
\rangle-\sum\limits_{1\leqslant i \leqslant n}\langle \nabla_{Y_i}Y_j,\nabla_{Y_i}Y_\alpha\rangle.\\
\end{array}
$$
The frame is orthonormal, hence,
\begin{equation}\label{eq1-pr13}
\begin{array}{c}
\sum\limits_{1\leqslant i \leqslant n}\langle \nabla_{E_i}\nabla_{E_i}Y_\alpha, Y_j \rangle=\sum\limits_{1\leqslant i
\leqslant n}\langle\nabla_{\left(\nabla_{Y_i}Y_i\right)}Y_j, Y_\alpha \rangle\\
- \langle\nabla_{(nH)}Y_j, Y_\alpha
\rangle-\sum\limits_{1\leqslant i \leqslant n}\langle \nabla_{Y_i}Y_j,\nabla_{Y_i}Y_\alpha\rangle.\\
\end{array}
\end{equation}
Substitute \eqref{eq1-pr13} in \eqref{eq1-pr11} and obtain \eqref{eq1}.
\end{proof}

Note that the Gauss map of a Lie subgroup is constant, therefore harmonic.

If $N$ is the Euclidean space $E^{n+q}$, then the curvature tensor vanishes. For any vector field $X$ and for left
invariant (i.e., constant) $Y$ the derivatives $\nabla_X Y$ also vanish. This yields that the conditions \eqref{eq1}
take the form $\langle \nabla_{Y_j}(nH),Y_\alpha\rangle=0$ for $1\leqslant j\leqslant n$, $n+1\leqslant \alpha\leqslant
n+q$, i.e., $\nabla^\bot H=0$, and we obtain the above-mentioned classical result of \cite{RV}.

The definition of the second fundamental form and the fact that the frame is orthonormal allow us to rewrite
\eqref{eq1} in the form
\begin{equation}\label{eq1-1}
\begin{array}{c}
\sum\limits_{1 \leqslant i \leqslant n}\langle \riemann(Y_j,Y_i)Y_i,Y_\alpha\rangle -\sum\limits_{1\leqslant i
\leqslant n}\langle\nabla_{\left(\nabla_{Y_i}Y_i\right)}Y_j, Y_\alpha \rangle
+\langle [nH,Y_j],Y_\alpha\rangle\\
-2\sum\limits_{1\leqslant i \leqslant n}\langle \nabla_{\left(\nabla_{Y_i}Y_j\right)^T}E_i,
Y^\alpha\rangle-2\sum\limits_{1 \leqslant i
\leqslant n,}\langle \left(\nabla_{Y_i}E_j\right)^\bot, \left(\nabla_{Y_i}Y_\alpha\right)^\bot\rangle\\
-\sum\limits_{1\leqslant i\leqslant n}\langle\left(\nabla_{Y_i}Y_j\right)^T,\left(\nabla_{Y_i}Y_\alpha\right)^T
\rangle+\sum\limits_{1\leqslant i\leqslant n}\langle\left(\nabla_{Y_i}Y_j\right)^\bot,
\left(\nabla_{Y_i}Y_\alpha\right)^\bot \rangle=0.\\
\end{array}
\end{equation}
Note that these expressions do not depend on the particular choice of $E_1, \dots, E_n$.

The summands in \eqref{eq1} that do not include the coefficients of the second fundamental form and the mean curvature
field can be rewritten:
$$
\begin{array}{c}
\sum\limits_{1 \leqslant i \leqslant n}\langle \riemann(Y_j,Y_i)Y_i,Y_\alpha\rangle-\sum\limits_{1\leqslant i \leqslant
n}\langle\nabla_{\left(\nabla_{Y_i}Y_i\right)}Y_j, Y_\alpha \rangle\\
-\sum\limits_{1\leqslant i\leqslant n}\langle\left(\nabla_{Y_i}Y_j\right)^T,\left(\nabla_{Y_i}Y_\alpha\right)^T
\rangle+\sum\limits_{1\leqslant i\leqslant n}\langle\left(\nabla_{Y_i}Y_j\right)^\bot,
\left(\nabla_{Y_i}Y_\alpha\right)^\bot \rangle\\
=\sum\limits_{1 \leqslant i \leqslant n}\langle \nabla_{Y_j}\nabla_{Y_i}Y_i-\nabla_{\left(\nabla_{Y_i}Y_i\right)}Y_j-
\nabla_{Y_i}\nabla_{Y_j}Y_i-\nabla_{[Y_j,Y_i]}Y_i\\
\vphantom{\sum\limits_{1 \leqslant i \leqslant n}}
+\nabla_{Y_i}\left(\nabla_{Y_j}Y_i+[Y_i,Y_j]\right)^T-\nabla_{Y_i}\left(\nabla_{Y_j}Y_i+[Y_i,Y_j]\right)^\bot, Y_\alpha
\rangle\\
=\sum\limits_{1 \leqslant i \leqslant n}\langle[Y_j,\nabla_{Y_i}Y_i]- \nabla_{Y_i}\nabla_{Y_j}Y_i
-\nabla_{[Y_j,Y_i]}Y_i+\nabla_{Y_i}[Y_j,Y_i]\\
\vphantom{\sum\limits_{1 \leqslant i \leqslant n}}
+\nabla_{Y_i}\left(\nabla_{Y_j}Y_i+2[Y_i,Y_j]\right)^T-\nabla_{Y_i}\left(\nabla_{Y_j}Y_i\right)^\bot, Y_\alpha
\rangle\\
=\sum\limits_{1 \leqslant i \leqslant n}\langle[Y_j,\nabla_{Y_i}Y_i]+[Y_i,[Y_j,Y_i]]
+2\nabla_{Y_i}\left(\left([Y_i,Y_j]\right)^T-\left(\nabla_{Y_j}Y_i\right)^\bot\right), Y_\alpha \rangle.\\
\end{array}
$$
In particular, a totally geodesic submanifold $M$ has the harmonic Gauss map at $p$ if and only if
\begin{equation}\label{eq1-2}
\begin{array}{c}
\sum\limits_{1 \leqslant i \leqslant n}\left([Y_j,\nabla_{Y_i}Y_i]+[Y_i,[Y_j,Y_i]]
+2\nabla_{Y_i}\left(\left([Y_i,Y_j]\right)^T-\left(\nabla_{Y_j}Y_i\right)^\bot\right)\right)^\bot=0\\
\end{array}
\end{equation}
for all $1\leqslant j \leqslant n$.

\section{Lie Groups with Biinvariant Metric}\label{ch3}

In this section we consider a Lie group $N$ with some biinvariant metric. The conditions from Theorem~\ref{th1} in this
particular case are relatively simple:

\begin{proposition}\label{pr1}
The Gauss map of a smooth submanifold $M$ in the Lie group $N$ with biinvariant metric $\langle \cdot,\cdot \rangle$ is
harmonic at a point $p \in M$ if and only if, in the above notation,
\begin{equation}\label{eq2}
\begin{array}{c}
\langle [nH,Y_j],Y_\alpha\rangle +\sum\limits_{1 \leqslant i \leqslant n,n+1\leqslant \gamma\leqslant
n+q}b^\gamma_{ij}\langle[Y_i,Y_\gamma],Y_\alpha\rangle\\
+\frac{1}{2}\sum\limits_{1\leqslant i \leqslant n}\langle
[Y_i,Y_j]^\bot,[Y_i,Y_\alpha]^\bot \rangle=0\\
\end{array}
\end{equation}
for $1\leqslant j\leqslant n$, $n+1\leqslant \alpha\leqslant n+q$.
\end{proposition}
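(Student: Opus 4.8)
The plan is to specialize the general criterion of Theorem~\ref{th1} to the biinvariant setting, where the Riemannian connection takes the well-known form $\nabla_X Y = \frac{1}{2}[X,Y]$ on left-invariant fields. The key identities I would invoke are: for biinvariant metrics the metric is $\mathrm{ad}$-invariant, i.e., $\langle [X,Y],Z\rangle = -\langle Y,[X,Z]\rangle$, and the curvature tensor on left-invariant fields is $\riemann(X,Y)Z = -\frac{1}{4}[[X,Y],Z]$. I would start from the version of the criterion written in the form \eqref{eq1} (or equivalently \eqref{eq1-1}), substitute these three formulas term by term, and show that the six summands collapse to the three terms displayed in \eqref{eq2}. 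The main algebraic device throughout is the $\mathrm{ad}$-invariance, which lets me move brackets across the inner product and convert expressions like $\langle \nabla_{Y_i}Y_\gamma, Y_\alpha\rangle$ into $\frac{1}{2}\langle[Y_i,Y_\gamma],Y_\alpha\rangle$.

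The computation proceeds by handling each line of \eqref{eq1} separately. First, the curvature term gives $\sum_i\langle\riemann(Y_j,Y_i)Y_i,Y_\alpha\rangle = -\frac{1}{4}\sum_i\langle[[Y_j,Y_i],Y_i],Y_\alpha\rangle$. Second, since $\nabla_{Y_i}Y_i = \frac{1}{2}[Y_i,Y_i]=0$, the entire term $\sum_i\langle\nabla_{(\nabla_{Y_i}Y_i)}Y_j,Y_\alpha\rangle$ vanishes outright. The term $\langle[nH,Y_j],Y_\alpha\rangle$ already appears verbatim in \eqref{eq2}, so it passes through unchanged. The mixed second-fundamental-form term $2\sum_{i,\gamma}b_{ij}^\gamma\langle\nabla_{Y_i}Y_\gamma,Y_\alpha\rangle$ becomes $\sum_{i,\gamma}b_{ij}^\gamma\langle[Y_i,Y_\gamma],Y_\alpha\rangle$, matching the second summand of \eqref{eq2}. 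The term $2\sum_{i,k}b_{ik}^\alpha\langle\nabla_{Y_i}Y_k,Y_j\rangle = \sum_{i,k}b_{ik}^\alpha\langle[Y_i,Y_k],Y_j\rangle$, which I expect to cancel against part of the remaining tangential/normal square terms, since the sum over $k$ of $b_{ik}^\alpha$ against $\langle[Y_i,Y_k],Y_j\rangle$ is antisymmetric-versus-symmetric in a way that forces vanishing or recombination.

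**The delicate bookkeeping — and the step I expect to be the main obstacle — is the recombination of the last three quadratic terms** $2\sum_i b_{ik}^\alpha\langle\nabla_{Y_i}Y_k,Y_j\rangle$, $-\sum_i\langle(\nabla_{Y_i}Y_j)^T,(\nabla_{Y_i}Y_\alpha)^T\rangle$, and $+\sum_i\langle(\nabla_{Y_i}Y_j)^\bot,(\nabla_{Y_i}Y_\alpha)^\bot\rangle$ into the single term $\frac{1}{2}\sum_i\langle[Y_i,Y_j]^\bot,[Y_i,Y_\alpha]^\bot\rangle$. Writing each $\nabla_{Y_i}Y_\bullet = \frac{1}{2}[Y_i,Y_\bullet]$, the tangential-square and normal-square terms become $\mp\frac{1}{4}\sum_i\langle[Y_i,Y_j]^{T/\bot},[Y_i,Y_\alpha]^{T/\bot}\rangle$. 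I expect the $b_{ik}^\alpha$ term to expand (using $b_{ik}^\alpha = \frac{1}{2}\langle[Y_i,Y_k],Y_\alpha\rangle$ and completeness of the orthonormal frame) into a sum over the tangential components of $[Y_i,Y_j]$ and $[Y_i,Y_\alpha]$ that precisely cancels the tangential-square term and doubles the surviving normal contribution, leaving exactly $\frac{1}{2}\sum_i\langle[Y_i,Y_j]^\bot,[Y_i,Y_\alpha]^\bot\rangle$.

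The final consolidation requires confirming that the curvature contribution $-\frac{1}{4}\sum_i\langle[[Y_j,Y_i],Y_i],Y_\alpha\rangle$ is absorbed into this quadratic recombination rather than surviving separately; by $\mathrm{ad}$-invariance, $-\frac{1}{4}\langle[[Y_j,Y_i],Y_i],Y_\alpha\rangle = \frac{1}{4}\langle[Y_j,Y_i],[Y_i,Y_\alpha]\rangle$, and splitting $[Y_i,Y_\alpha]$ and $[Y_j,Y_i]=-[Y_i,Y_j]$ into tangential and normal parts and re-expanding $b^\gamma_{ij}$ should show that the tangential-tangential and tangential-normal cross terms cancel, with the curvature term combining with the quadratic pieces to yield the single coefficient $\frac{1}{2}$ on the normal-normal term. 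The bulk of the work is thus careful tracking of signs and of $T$-versus-$\bot$ projections under $\mathrm{ad}$-invariance; no new geometric idea is needed beyond the three biinvariant formulas.
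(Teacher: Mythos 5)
Your proposal follows essentially the same route as the paper: specialize \eqref{eq1} using $\nabla_XY=\tfrac12[X,Y]$, $\riemann(X,Y)Z=-\tfrac14[[X,Y],Z]$ and ad-invariance, observe that the $\nabla_{(\nabla_{Y_i}Y_i)}$ term and the $b_{ik}^\alpha$ term drop out (the latter vanishes outright, symmetry of $b_{ik}^\alpha$ in $i,k$ against antisymmetry of $\tfrac12\langle[Y_i,Y_k],Y_j\rangle$ --- it is not needed to cancel the tangential-square term, which is instead absorbed by the curvature term via the full-bracket decomposition), leaving exactly $\tfrac12\sum_i\langle[Y_i,Y_j]^\bot,[Y_i,Y_\alpha]^\bot\rangle$. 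The one slip is the sign in your displayed identity: ad-invariance gives $-\tfrac14\langle[[Y_j,Y_i],Y_i],Y_\alpha\rangle=-\tfrac14\langle[Y_j,Y_i],[Y_i,Y_\alpha]\rangle=+\tfrac14\langle[Y_i,Y_j],[Y_i,Y_\alpha]\rangle$, and this positive sign is precisely what cancels the $-\tfrac14\sum_i\langle[Y_i,Y_j],[Y_i,Y_\alpha]\rangle$ coming from the quadratic terms; with your stated sign the computation would produce $-\tfrac12\langle[Y_i,Y_j]^T,[Y_i,Y_\alpha]^T\rangle$ instead of the correct normal-normal term.
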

\begin{proof}
Recall that the left invariant metric $\langle \cdot,\cdot\rangle$ is biinvariant if and only if $\langle
[X,Y],Z\rangle=\langle X,[Y,Z]\rangle$ for all left invariant $X$, $Y$, and $Z$. Also, $\nabla_X Y=\frac{1}{2}[X,Y]$.
In particular, $\nabla_X Y=-\nabla_Y X$ and $\nabla_X X=0$. This, together with the symmetry of the second fundamental
form, implies $\sum\limits_{1\leqslant i,k \leqslant n}b_{ik}^\alpha\langle\nabla_{Y_i}Y_k, Y_j \rangle=0$. The
curvature tensor is defined by the equation $\riemann(X,Y)Z=-\frac{1}{4}[[X,Y],Z]$. Thus,
$$
\begin{array}{c}
-\sum\limits_{1\leqslant i\leqslant n}\langle\left(\nabla_{Y_i}Y_j\right)^T,\left(\nabla_{Y_i}Y_\alpha\right)^T
\rangle+\sum\limits_{1\leqslant i\leqslant n}\langle\left(\nabla_{Y_i}Y_j\right)^\bot,
\left(\nabla_{Y_i}Y_\alpha\right)^\bot \rangle\\
=-\frac{1}{4}\sum\limits_{1\leqslant i\leqslant n}\langle[Y_i,Y_j]^T,[Y_i,Y_\alpha]^T
\rangle+\frac{1}{4}\sum\limits_{1\leqslant i\leqslant n}\langle [Y_i,Y_j]^\bot, [Y_i,Y_\alpha]^\bot \rangle\\
=-\frac{1}{4}\sum\limits_{1\leqslant i\leqslant n}\langle[Y_i,Y_j],[Y_i,Y_\alpha]
\rangle+\frac{1}{2}\sum\limits_{1\leqslant i\leqslant n}\langle [Y_i,Y_j]^\bot, [Y_i,Y_\alpha]^\bot \rangle\\
=\sum\limits_{1\leqslant i\leqslant n}\langle\riemann(Y_i,Y_j),Y_i),Y_\alpha \rangle+\frac{1}{2}\sum\limits_{1\leqslant
i\leqslant n}\langle [Y_i,Y_j]^\bot, [Y_i,Y_\alpha]^\bot \rangle.\\
\end{array}
$$
Substitute this in \eqref{eq1} and obtain \eqref{eq2}.
\end{proof}

If $q=1$ (i.e., $M$ is a hypersurface), then $\langle [Y_i,Y_{n+1}],Y_{n+1}\rangle=\langle
Y_i,[Y_{n+1},Y_{n+1}]\rangle$ vanishes for all $1\leqslant i\leqslant n$, i.e., $[Y_i,Y_{n+1}]^\bot=0$. It follows that
\eqref{eq2} gives the conditions $Y_j(nH)=0$, where $H$ is the mean curvature function. This implies the result from
\cite{ES} cited in the introduction.

Denote by $\mathcal{N}$ the Lie algebra of $N$. It is well-known (see, for example, \cite{Mi}, Lemma~7.5), that
$\mathcal{N}$ is compact, i.e., $\mathcal{N}=\mathcal{Z}\oplus\mathcal{N}'$, where the direct sum is orthogonal,
$\mathcal{Z}$ is abelian, and $\mathcal{N}'=[\mathcal{N},\mathcal{N}]$ is semisimple with the negative definite Killing
form.

Let $M$ be totally geodesic submanifold of $N$, $\Psi \colon M \rightarrow N$ the corresponding immersion, and $p$ an
arbitrary point of $M$. Consider the immersion $\Psi'=L_{\Psi(p)^{-1}}\circ \Psi \colon M \rightarrow N$. The image
$\Psi'(p)$ coincides with the identity element $e$ of the group. The Gauss map of this immersion maps each point $r\in
M$ to the subspace $\Phi'(r)=dL_{\Psi'(p)^{-1}}\circ d\Psi'(T_r M)=dL_{\Psi(p)^{-1}}\circ d\Psi(T_r M)=\Phi(r)$, i.e.,
the Gauss maps of two immersions are the same. Left translations are isometries of $N$, hence $\Psi'$ is also totally
geodesic. Thus we can assume without loss of generality that $\Psi(p)=e$. Then the tangent space $T_e M$ is a Lie
triple system in $\mathcal{N}$ (see, for example, \cite{KN}, Theorem~4.3 of Chapter XI). The subspace
$\overline{\mathcal{N}}=T_e M+[T_e M,T_e M]$ is a compact Lie subalgebra, therefore it has an orthogonal direct
decomposition $\overline{\mathcal{N}}=\overline{\mathcal{Z}}\oplus\overline{\mathcal{N}}'$ with abelian
$\overline{\mathcal{Z}}$ and semisimple $\overline{\mathcal{N}}'=[\overline{\mathcal{N}},\overline{\mathcal{N}}]$. Take
the decomposition $Y_a=X_a+Z_a$ for $1 \leqslant a \leqslant n+q_1$, where $X_a\in \overline{\mathcal{N}}'$,
$Z_a\in\overline{\mathcal{Z}}$, $\dim \overline{\mathcal{N}} = n+q_1$. Then for $1 \leqslant a,b \leqslant n+q_1$ the Lie bracket $[Y_a,Y_b]=[X_a,X_b]$. Denote
by $\mathcal{W}$ the subspace spanned by $X_1,\dots,X_n$ (i.e., the orthogonal projection of $T_e M$ to
$\overline{\mathcal{N}}'$). It is a Lie triple system in $\overline{\mathcal{N}}'$, and
$\overline{\mathcal{N}}'=\mathcal{W}+[\mathcal{W},\mathcal{W}]$. The intersection
$\overline{\mathcal{W}}=\mathcal{W}\cap[\mathcal{W},\mathcal{W}]$ is an ideal (from this point on by ideals we mean
ideals in $\overline{\mathcal{N}}$). The Lie algebra $\overline{\mathcal{N}}'$ is semisimple, consequently the
orthogonal complement $\mathcal{V}$ to $\overline{\mathcal{W}}$ is an ideal and equals an orthogonal direct sum
$\bigoplus\limits_{1\leqslant l \leqslant m}\mathcal{S}_l$ of simple ideals $\mathcal{S}_l$.

\begin{theorem}\label{pr2}
Let $M$ be a smooth immersed totally geodesic submanifold in a Lie group $N$ with biinvariant metric. Then
\begin{enumerate}

\item\label{pr2-it1} if the restriction of the metric to $\mathcal{V}$ is a negative multiple of the Killing form (in
particular, if $\mathcal{V}$ is simple), then the Gauss map of $M$ in this metric is harmonic;

\item\label{pr2-it2} if $\mathcal{W}\cap\mathcal{V}=\bigoplus\limits_{1\leqslant l \leqslant m}\mathcal{W}_l$, where
$\mathcal{W}_l\subset\mathcal{S}_l$ is a proper Lie triple system in $\mathcal{S}_l$, i.e., $\mathcal{W}_l \neq 0$ and
$\mathcal{W}_l \neq \mathcal{S}_l$ for each $1\leqslant l \leqslant m$ (in particular, if $\mathcal{V}=0$), then the
Gauss map of $M$ is harmonic in any biinvariant metric on $N$;

\item\label{pr2-it3} if the condition of \ref{pr2-it2} is not satisfied, then there is a biinvariant metric on $N$ such
that the Gauss map of $M$ is not harmonic.

\end{enumerate}
\end{theorem}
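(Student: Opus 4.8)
The plan is to specialize the criterion \eqref{eq2} (equivalently \eqref{eq1-2}) of Proposition~\ref{pr1} to the totally geodesic case and then read the three alternatives off the structure of the symmetric pair attached to $\mathcal{W}$. First I would reduce everything to the semisimple part $\overline{\mathcal{N}}'$. Since $M$ is totally geodesic, $H=0$ and all $b^\gamma_{ij}=0$, so \eqref{eq2} collapses to $\sum_{1\le i\le n}\langle[Y_i,Y_j]^\bot,[Y_i,Y_\alpha]^\bot\rangle=0$ for every tangent $j$ and normal $\alpha$. If $Y_\alpha\perp\overline{\mathcal{N}}$ then biinvariance gives $[Y_i,Y_\alpha]\perp\overline{\mathcal{N}}$ while $[Y_i,Y_j]^\bot\in\overline{\mathcal{N}}$, so such $\alpha$ contribute nothing; for the remaining indices $[Y_a,Y_b]=[X_a,X_b]$, so only the projections $X_a\in\overline{\mathcal{N}}'$ enter. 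Using biinvariance once more the criterion becomes the single requirement
$$
\sum_{1\le i\le n}[X_i,[X_i,X_j]^\bot]\in T_eM\qquad(1\le j\le n).
$$

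Next I would exploit the Lie triple system property of $T_eM$: lifting $u,v,w\in\mathcal{W}$ to tangent vectors and using that brackets ignore the central summand $\overline{\mathcal{Z}}$, one gets $[[u,v],w]\in T_eM$. Two consequences: writing $\overline{\mathcal{W}}=[[\overline{\mathcal{W}},\overline{\mathcal{W}}],\overline{\mathcal{W}}]$ shows $\overline{\mathcal{W}}\subseteq T_eM$, and $\sum_i[X_i,[X_i,X_j]]\in T_eM$. Subtracting, the displayed condition is equivalent to $\sum_i[X_i,[X_i,X_j]^T]\in T_eM$; as $[X_i,X_j]^T$ differs from its $\overline{\mathcal{N}}'$-projection only by a central vector, which $\mathrm{ad}_{X_i}$ annihilates, this reads $\sum_i[X_i,\Pi[X_i,X_j]]\in T_eM$, where $\Pi$ is the compression to $\overline{\mathcal{N}}'$ of orthogonal projection onto $T_eM$ (so $\mathrm{im}\,\Pi=\mathcal{W}$). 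For $X_j\in\overline{\mathcal{W}}\subseteq T_eM$ this holds, so everything reduces to $X_j\in\mathcal{W}_V:=\mathcal{W}\cap\mathcal{V}$. Here $\mathcal{V}=\mathcal{W}_V\oplus\mathcal{K}_V$ with $\mathcal{K}_V:=[\mathcal{W}_V,\mathcal{W}_V]$ is a symmetric (Cartan-type) decomposition with involution $\sigma=-1$ on $\mathcal{W}_V$, $+1$ on $\mathcal{K}_V$, since $\mathcal{W}_V\cap[\mathcal{W}_V,\mathcal{W}_V]\subseteq\mathcal{V}\cap\overline{\mathcal{W}}=0$. For $X_j\in\mathcal{W}_V$ one has $[X_i,X_j]\in\mathcal{K}_V$, whence $\Pi[X_i,X_j]=0$ precisely when $\mathcal{W}_V\perp\mathcal{K}_V$, i.e. precisely when $\sigma$ is an isometry of the metric.

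It then remains to decide when $\sigma$ is an isometry. As an automorphism of $\mathcal{V}$, $\sigma$ preserves the Killing form $B$ and permutes the simple ideals $\mathcal{S}_l$, fixing some and interchanging others in isomorphic pairs; the aligned case $\mathcal{W}_V=\bigoplus_l\mathcal{W}_l$ with $\mathcal{W}_l\subseteq\mathcal{S}_l$ is exactly the case in which $\sigma$ fixes every $\mathcal{S}_l$. This yields \ref{pr2-it1} and \ref{pr2-it2} together: if the metric on $\mathcal{V}$ is $-\lambda B$ then $\sigma$ preserves it for any permutation, and if $\sigma$ fixes each $\mathcal{S}_l$ then it preserves $-\lambda_lB|_{\mathcal{S}_l}$ for arbitrary weights $\lambda_l$; either way $\mathcal{W}_V\perp\mathcal{K}_V$, $\Pi[X_i,X_j]=0$, and the reduced condition holds (in \ref{pr2-it2} for every biinvariant metric). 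For \ref{pr2-it3}, when the hypothesis of \ref{pr2-it2} fails $\sigma$ genuinely swaps a pair $\mathcal{S}_1\leftrightarrow\mathcal{S}_2$ (group type), so $\mathcal{W}_V$ contains an anti-diagonal $\{(x,-x)\}$ and $\mathcal{K}_V$ the diagonal $\{(x,x)\}$. Choosing a biinvariant metric with $\lambda_1\neq\lambda_2$ on this pair and testing \eqref{eq2} on vectors supported in $\mathcal{S}_1\oplus\mathcal{S}_2$, a direct computation in a $(-B)$-orthonormal basis $\{e_a\}$ of the common factor, with totally antisymmetric structure constants $f_{abc}$, gives
$$
\sum_{i,\gamma}\langle X_j,[X_i,X_\gamma]\rangle\,\langle X_\alpha,[X_i,X_\gamma]\rangle=\frac{\lambda_1-\lambda_2}{(\lambda_1+\lambda_2)^2}\sum_{i,\gamma}f_{ji\gamma}f_{\alpha i\gamma}=\frac{(\lambda_1-\lambda_2)\,\kappa}{(\lambda_1+\lambda_2)^2}\,\delta_{j\alpha}
$$
with $\kappa>0$; taking $j=\alpha$ this is nonzero, so \eqref{eq2} fails and the Gauss map is not harmonic.

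The main obstacle is the metric dependence that separates the three cases: one must recognize that the problem is governed entirely by whether the symmetric-pair involution $\sigma$ is an isometry, and that this breaks down exactly for group-type factors under unequal scaling, which is what the non-vanishing computation above pins down. A secondary technical point, which I would dispose of first by means of the triple-bracket identity, is that the criterion of Theorem~\ref{th1} uses projection off $T_eM$ rather than off $\mathcal{W}$; the central tilt of $T_eM$ inside $\overline{\mathcal{N}}$ must be shown not to disturb the reduction, and this is precisely what the inclusions $\overline{\mathcal{W}}\subseteq T_eM$ and $\sum_i[X_i,[X_i,X_j]]\in T_eM$ accomplish.
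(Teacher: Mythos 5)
Your reduction and your treatment of parts \ref{pr2-it1} and \ref{pr2-it2} are correct and run parallel to the paper's argument, but through a different lens. The paper reaches the same intermediate conditions --- for totally geodesic $M$ the criterion collapses to $\sum_i\langle[Y_i,Y_j]^\bot,[Y_i,Y_\alpha]^\bot\rangle=0$, which by the Lie-triple-system identity is equivalent to \eqref{pr2-eq1-2}, and then to the single question of whether $\widetilde{\mathcal{W}}=\mathcal{W}\cap\mathcal{V}$ is orthogonal to $[\widetilde{\mathcal{W}},\widetilde{\mathcal{W}}]$ --- but it decides that question by a projection dichotomy: for each simple ideal either $P_l(\widetilde{\mathcal{W}})\cap[P_l(\widetilde{\mathcal{W}}),P_l(\widetilde{\mathcal{W}})]=0$ (and then $\ad$-invariance of the trace form forces orthogonality) or $\mathcal{S}_l=P_l(\widetilde{\mathcal{W}})=[P_l(\widetilde{\mathcal{W}}),P_l(\widetilde{\mathcal{W}})]$. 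Your involution $\sigma$ ($-1$ on $\widetilde{\mathcal{W}}$, $+1$ on $[\widetilde{\mathcal{W}},\widetilde{\mathcal{W}}]$) encodes exactly this dichotomy: $\sigma$ fixes $\mathcal{S}_l$ in the first case and swaps $\mathcal{S}_l$ with an isomorphic partner in the second, and ``is $\sigma$ an isometry of the chosen biinvariant metric'' cleanly organizes all three cases. Your route buys a transparent explanation of why \ref{pr2-it3} is precisely the group-type situation under unequal scaling; the paper's version buys a computation-ready normal form for the counterexample.

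The one place that needs repair is the non-harmonicity computation in \ref{pr2-it3}. Once $\lambda_1\neq\lambda_2$, the diagonal $\{(x,x)\}$ is no longer orthogonal to the anti-diagonal $\{(x,-x)\}$ --- that failure of orthogonality is the whole point --- so a diagonal vector is not an admissible normal test vector $Y_\alpha$: the criterion only constrains the pairing of $\sum_i[Y_i,[Y_i,Y_j]^T]$ against vectors genuinely orthogonal to $T_eM$, and a nonzero pairing against a non-normal vector proves nothing. You must orthogonalize, i.e.\ test against vectors of the form $(y,(\lambda_1/\lambda_2)y)$; this is exactly what the paper does by choosing $Y\perp\widetilde{\mathcal{W}}$ with $P_{l_0}(Y)=P_{l_0}(X)$ for a tangential $X$, after which \eqref{pr2-eq1-2} becomes a manifestly positive sum of squares. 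You should also record that the contributions from ideals other than the swapped pair vanish, which holds because your metric keeps the pure (negative) Killing form there, so the argument of part \ref{pr2-it1} applies to those summands. Both repairs are routine, and the paper's $\mathfrak{so}(3)\oplus\mathfrak{so}(3)$ example confirms that the corrected computation still yields a term proportional to $\lambda_1-\lambda_2$; with these adjustments your proof goes through.
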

\begin{proof}
The conditions \eqref{eq2} for $1\leqslant j\leqslant n$, $n+1\leqslant \alpha\leqslant n+q$ take the form
\begin{equation}\label{pr2-eq1-1}
\begin{array}{c}
\sum\limits_{1\leqslant i \leqslant n}\langle [Y_i,Y_j]^\bot,[Y_i,Y_\alpha]^\bot \rangle=0.\\
\end{array}
\end{equation}
Also note that
$$
\begin{array}{c}
\sum\limits_{1\leqslant i \leqslant n}\langle [Y_i,Y_j]^T,[Y_i,Y_\alpha]^T \rangle+\sum\limits_{1\leqslant i \leqslant
n}\langle [Y_i,Y_j]^\bot,[Y_i,Y_\alpha]^\bot \rangle\\
=\sum\limits_{1\leqslant i \leqslant n}\langle [Y_i,Y_j],[Y_i,Y_\alpha]\rangle=\sum\limits_{1\leqslant i \leqslant
n}\langle [[Y_i,Y_j],Y_i],Y_\alpha\rangle=0\\
\end{array}
$$
since the tangent space $T_e M$ is a Lie triple system. Hence the conditions \eqref{pr2-eq1-1} are equivalent to
\begin{equation}\label{pr2-eq1-2}
\begin{array}{c}
\sum\limits_{1\leqslant i \leqslant n}\langle [Y_i,Y_j]^T,[Y_i,Y_\alpha]^T \rangle=0.\\
\end{array}
\end{equation}
Note that \eqref{pr2-eq1-1} and \eqref{pr2-eq1-2} can also be obtained directly from \eqref{eq1-2} using the fact that
$T_e M$ is a Lie triple system and the expression for the invariant Riemannian connection.

The ideal $\overline{\mathcal{W}}$ is semisimple, hence
$\overline{\mathcal{W}}=[\overline{\mathcal{W}},\overline{\mathcal{W}}]$ and
$$
\begin{array}{c}
\overline{\mathcal{W}}=[\overline{\mathcal{W}}, \overline{\mathcal{W}}]=[\overline{\mathcal{W}},
[\overline{\mathcal{W}},\overline{\mathcal{W}}]] \subset[\mathcal{W}, [\mathcal{W},\mathcal{W}]]=[T_e M, [T_e M,T_e
M]]\subset T_e M.\\
\end{array}
$$
This implies that we can choose a frame of $T_e M$ such that $Y_i=X_i\in \overline{\mathcal{W}}$ for $1\leqslant i
\leqslant n_1$, where $0 \leqslant n_1 \leqslant n$, and $Y_i=X_i+Z_i$ with $X_i
\in\widetilde{\mathcal{W}}=\mathcal{W}\cap\mathcal{V}$ and $Z\in\overline{\mathcal{Z}}$ for $n_1+1\leqslant i \leqslant
n$. For $1\leqslant j \leqslant n_1$ the equations in \eqref{pr2-eq1-1} become
$$
\begin{array}{c}
\sum\limits_{1\leqslant i \leqslant n}\langle [Y_i,Y_j]^\bot,[Y_i,Y_\alpha]^\bot \rangle=\sum\limits_{1\leqslant i
\leqslant n_1}\langle [Y_i,Y_j]^\bot,[Y_i,Y_\alpha]^\bot \rangle=0\\
\end{array}
$$
because $[Y_i,Y_j]\in T_e M$ for $1\leqslant i \leqslant n_1$. This yields that for showing harmonicity or
non-harmonicity of the Gauss map at the point it suffices to check \eqref{pr2-eq1-1} or \eqref{pr2-eq1-2} for
$n_1+1\leqslant j \leqslant n$.

The subspace $\widetilde{\mathcal{W}}$ is a Lie triple system in a semisimple Lie algebra $\mathcal{V}$, and
$\mathcal{V}=\widetilde{\mathcal{W}}+[\widetilde{\mathcal{W}},\widetilde{\mathcal{W}}]$. Moreover,
$\widetilde{\mathcal{W}}\cap[\widetilde{\mathcal{W}},\widetilde{\mathcal{W}}]=0$ because $\mathcal{V}$ is a direct
complement to $\mathcal{W}\cap[\mathcal{W},\mathcal{W}]$. For each $1\leqslant l\leqslant m$ the restriction of the
inner product to $\mathcal{S}_l$ is equal to the Killing form multiplied by a negative constant: $\langle X,Y
\rangle=\lambda_l \trace(\ad X \circ \ad Y)$ for $X,Y \in \mathcal{S}_l$, $\lambda_l<0$ (See \cite{Mi}, Lemma~7.6).
Here by $\ad X$ we mean the restriction of the adjoint representation operator to the corresponding simple ideal.
Denote by $P_l$ the orthogonal projection to $\mathcal{S}_l$, then $\langle X,Y \rangle=\sum\limits_{1\leqslant l
\leqslant m}\lambda_l \trace(\ad P_l(X) \circ \ad P_l(Y))$ for $X,Y \in \mathcal{V}$.

For each $1\leqslant l\leqslant m$ the operator $P_l$ is a Lie algebra homomorphism, therefore
$P_l([\widetilde{\mathcal{W}},\widetilde{\mathcal{W}}])=[P_l(\widetilde{\mathcal{W}}),P_l(\widetilde{\mathcal{W}})]$
and
$$
\begin{array}{c}
\mathcal{S}_l=P_l(\mathcal{V})=P_l(\widetilde{\mathcal{W}}+[\widetilde{\mathcal{W}},\widetilde{\mathcal{W}}])
=P_l(\widetilde{\mathcal{W}})+[P_l(\widetilde{\mathcal{W}}),P_l(\widetilde{\mathcal{W}})].\\
\end{array}
$$
The intersection $P_l(\widetilde{\mathcal{W}})\cap[P_l(\widetilde{\mathcal{W}}),P_l(\widetilde{\mathcal{W}})]$ is an
ideal in simple $\mathcal{S}_l$. Hence either
$P_l(\widetilde{\mathcal{W}})\cap[P_l(\widetilde{\mathcal{W}}),P_l(\widetilde{\mathcal{W}})]=0$ or
$\mathcal{S}_l=P_l(\widetilde{\mathcal{W}})=[P_l(\widetilde{\mathcal{W}}),P_l(\widetilde{\mathcal{W}})]$. In the first
case the operators $\ad X$ for $X \in P_l(\widetilde{\mathcal{W}})$ map $P_l(\widetilde{\mathcal{W}})$ to
$[P_l(\widetilde{\mathcal{W}}),P_l(\widetilde{\mathcal{W}})]$ and
$[P_l(\widetilde{\mathcal{W}}),P_l(\widetilde{\mathcal{W}})]$ to $P_l(\widetilde{\mathcal{W}})$. The operators $\ad Y$
for $Y \in [P_l(\widetilde{\mathcal{W}}),P_l(\widetilde{\mathcal{W}})]$ map the subspaces
$P_l(\widetilde{\mathcal{W}})$ and $[P_l(\widetilde{\mathcal{W}}),P_l(\widetilde{\mathcal{W}})]$ to themselves. It
follows that $\langle P_l(\widetilde{\mathcal{W}}),
[P_l(\widetilde{\mathcal{W}}),P_l(\widetilde{\mathcal{W}})]\rangle=0$.

If the restriction of the metric to $\mathcal{V}$ is a negative multiple of the Killing form (the case of
\ref{pr2-it1}), then the same argument shows that $\langle \widetilde{\mathcal{W}},
[\widetilde{\mathcal{W}},\widetilde{\mathcal{W}}]\rangle=0$.

Consider the case $P_l(\widetilde{\mathcal{W}})\cap[P_l(\widetilde{\mathcal{W}}),P_l(\widetilde{\mathcal{W}})]=0$ for
all $1\leqslant l\leqslant m$. We proved that $\langle
P_l(\widetilde{\mathcal{W}}),[P_l(\widetilde{\mathcal{W}}),P_l(\widetilde{\mathcal{W}})]\rangle=0$ for all $l$, thus
$\langle \widetilde{\mathcal{W}},[\widetilde{\mathcal{W}},\widetilde{\mathcal{W}}]\rangle=0$. For each $1\leqslant
l\leqslant m$ denote $P_l(\widetilde{\mathcal{W}})$ by $\mathcal{W}_l$. Then
$\langle\mathcal{W}_l,[\widetilde{\mathcal{W}},\widetilde{\mathcal{W}}]\rangle
=\langle\mathcal{W}_l,[\mathcal{W}_l,\mathcal{W}_l]\rangle=0$, hence $\mathcal{W}_l$ is contained in the orthogonal
complement of $[\widetilde{\mathcal{W}},\widetilde{\mathcal{W}}]$, i.e., in $\widetilde{\mathcal{W}}$; and
$\widetilde{\mathcal{W}}=\bigoplus\limits_{1\leqslant l \leqslant m}\mathcal{W}_l$. Subspaces $\mathcal{W}_l$ are Lie
triple systems, $\mathcal{W}_l\neq 0$ because in the opposite case $[\mathcal{W}_l,\mathcal{W}_l]=0$ and
$\mathcal{S}_l=0$. It contradicts the fact that $\mathcal{S}_l$ is simple. If $\mathcal{W}_l=\mathcal{S}_l$, then
$\mathcal{W}_l=[\mathcal{W}_l,\mathcal{W}_l]$ because $\mathcal{S}_l$ is simple, a contradiction. It follows that
$\mathcal{W}_l\neq\mathcal{S}_l$. This is the case of \ref{pr2-it2}. It is easy to see also that the condition in
\ref{pr2-it2} implies $P_l(\widetilde{\mathcal{W}})\cap[P_l(\widetilde{\mathcal{W}}),P_l(\widetilde{\mathcal{W}})]=0$
for all $1\leqslant l\leqslant m$. In fact, if $\widetilde{\mathcal{W}}=\bigoplus\limits_{1\leqslant l \leqslant
m}\mathcal{W}_l$ with $\mathcal{W}_l \subset \mathcal{S}_l$, then $P_l(\widetilde{\mathcal{W}})=\mathcal{W}_l$,
$[P_l(\widetilde{\mathcal{W}}),P_l(\widetilde{\mathcal{W}})]=[\mathcal{W}_l,\mathcal{W}_l]$, therefore the case
$\mathcal{S}_l=P_l(\widetilde{\mathcal{W}})=[P_l(\widetilde{\mathcal{W}}),P_l(\widetilde{\mathcal{W}})]$ is excluded by
the condition $\mathcal{W}_l \neq \mathcal{S}_l$.

Assume that $\langle \widetilde{\mathcal{W}}, [\widetilde{\mathcal{W}},\widetilde{\mathcal{W}}]\rangle=0$. Take any
$n_1+1\leqslant j \leqslant n$. For $1\leqslant i\leqslant n_1$ $[Y_i,Y_j]=0$ and for $n_1+1\leqslant i \leqslant n$
$$
\begin{array}{c}
[Y_i,Y_j]^T=\sum\limits_{1\leqslant k \leqslant n}\langle[Y_i,Y_j],Y_k\rangle Y_k=\sum\limits_{n_1+1\leqslant k
\leqslant n}\langle[X_i,X_j],X_k\rangle Y_k=0\\
\end{array}
$$
because $X_i \in\widetilde{\mathcal{W}}$ for $n_1+1\leqslant i \leqslant n$. This yields that \eqref{pr2-eq1-2} is
satisfied. We proved \ref{pr2-it1} and \ref{pr2-it2}.

Finally, in the case \ref{pr2-it3} there is $1\leqslant l_0\leqslant m$ such that
$\mathcal{S}_{l_0}=P_{l_0}(\widetilde{\mathcal{W}})=[P_{l_0}(\widetilde{\mathcal{W}}),P_{l_0}(\widetilde{\mathcal{W}})]$.
Consider the new metric $\langle\cdot,\cdot\rangle'$ such that it is equal to $\langle\cdot,\cdot\rangle$ on the
orthogonal complement to $\mathcal{V}$ and
$$
\begin{array}{c}
\langle X,Y \rangle'=\sum\limits_{1\leqslant l \leqslant m}\left(-\trace(\ad P_l(X) \circ \ad
P_l(Y))\right)-\lambda^2\trace(\ad P_{l_0}(X) \circ \ad P_{l_0}(Y))\\
\end{array}
$$
for $X,Y \in \mathcal{V}$, where $\lambda \neq 0$. It is a biinvariant metric. Denote $-\trace(\ad P_{l_0}(X) \circ \ad
P_{l_0}(Y))$ by $\langle X,Y\rangle''$.

The ideal $\mathcal{S}_{l_0}$ is not contained in $\widetilde{\mathcal{W}}$ because in the opposite case it is
contained also in $\widetilde{\mathcal{W}}\cap[\widetilde{\mathcal{W}},\widetilde{\mathcal{W}}]=0$, a contradiction. It
follows that there is a vector $Y$ orthogonal to $\widetilde{\mathcal{W}}$ such that $P_{l_0}(Y)\neq 0$. Then
$\mathcal{S}_{l_0}=P_{l_0}(\widetilde{\mathcal{W}})$ implies that there is a vector $X \in \widetilde{\mathcal{W}}$
such that $P_{l_0}(X)=P_{l_0}(Y)$. Note that $Y$ is orthogonal to $T_e M$. We can consider that the norm of $Y$ equals
$1$. Choose an orthonormal frames of $T_e M$ and $N_e M$ such that $Y_{j_0}=X+Z_{j_0}$ for some $n_1+1\leqslant j_0
\leqslant n$ and $Y_{\alpha_0}=Y$ for some $n+1\leqslant \alpha_0 \leqslant n+q_1$. Then the discussion above implies
that for any $n_1+1\leqslant i \leqslant n$ in the new metric
$$
\begin{array}{c}
[Y_i,Y_{j_0}]^T=\sum\limits_{1\leqslant k \leqslant n}\langle[Y_i,Y_{j_0}],Y_k\rangle' Y_k=\sum\limits_{n_1+1\leqslant
k \leqslant n}\langle[X_i,X],X_k\rangle' Y_k\\
=\lambda^2\sum\limits_{n_1+1\leqslant k \leqslant n}\langle[P_{l_0}(X_i),P_{l_0}(X)],P_{l_0}(X_k)\rangle'' Y_k\\
=-\lambda^2\sum\limits_{n_1+1\leqslant k \leqslant n}\langle[P_{l_0}(X_i),P_{l_0}(X_k)],P_{l_0}(X)\rangle'' Y_k.\\
\end{array}
$$
There is some $n_1+1\leqslant i \leqslant n$ such that this expression does not vanish because
$\mathcal{S}_{l_0}=[P_{l_0}(\widetilde{\mathcal{W}}),P_{l_0}(\widetilde{\mathcal{W}})]$. Similarly,
$$
\begin{array}{c}
[Y_i,Y_{\alpha_0}]^T
=-\lambda^2\sum\limits_{n_1+1\leqslant k \leqslant n}\langle[P_{l_0}(X_i),P_{l_0}(X_k)],P_{l_0}(Y)\rangle'' Y_k.\\
\end{array}
$$
The expression in \eqref{pr2-eq1-2} for $j=j_0$ and $\alpha=\alpha_0$ thus becomes
$$
\begin{array}{c}
\lambda^4\sum\limits_{n_1+1\leqslant i,k \leqslant n}\left(\langle [P_{l_0}(X_i),P_{l_0}(X_k)],
P_{l_0}(X)\rangle''\right)^2 \neq 0.\\
\end{array}
$$
Therefore the Gauss map is not harmonic.
\end{proof}

A Lie triple system $\mathcal{U}$ is {\itshape reducible} if $\mathcal{U}=\mathcal{U}_1\oplus\mathcal{U}_2$, where
$\mathcal{U}_1$ and $\mathcal{U}_2$ are nonzero Lie triple systems such that $[\mathcal{U}_1,\mathcal{U}_2]=0$, and is
{\itshape irreducible} otherwise (see, for example, Appendix~1 of \cite{E3}). Theorem~\ref{pr2} then implies that if
$\widetilde{\mathcal{W}}$ is irreducible and $\mathcal{V}$ is not simple, then there is a biinvariant metric on $N$
such that the Gauss map of $M$ is not harmonic.

Consider an example. Let $\mathcal{N}$ be $\mathfrak{so}(3)\oplus\mathfrak{so}(3)$ with the orthogonal basis consisting
of the vectors $e_1,e_2,e_3,f_1,f_2,f_3$ with the nonzero brackets
$$
\begin{array}{c}
\vphantom{\sum\limits_{1\leqslant i \leqslant n}}
[e_1,e_2]=-[e_2,e_1]=e_3,[e_2,e_3]=-[e_3,e_2]=e_1,[e_3,e_1]=-[e_1,e_3]=e_2,\\
\vphantom{\sum\limits_{1\leqslant i \leqslant n}}
[f_1,f_2]=-[f_2,f_1]=f_3,[f_2,f_3]=-[f_3,f_2]=f_1,[f_3,f_1]=-[f_1,f_3]=f_2.\\
\end{array}
$$
Let $\mathcal{W}$ be the subspace spanned by $e_1+f_1$, $e_2-f_2$, and $e_3+f_3$. Let $M$ be $\exp(\mathcal{W})$, hence
$T_e M=\mathcal{W}$. The bracket $[\mathcal{W},\mathcal{W}]$ is spanned by $e_1-f_1$, $e_2+f_2$, and $e_3-f_3$. It is
easy to see that $\mathcal{W}$ is a Lie triple system. In our notation,
$\mathcal{N}=\overline{\mathcal{N}}=\overline{\mathcal{N}}'=\mathcal{V}=\mathcal{W}+[\mathcal{W},\mathcal{W}]$. The
intersection $\overline{\mathcal{W}}=\mathcal{W}\cap[\mathcal{W},\mathcal{W}]$ vanishes, therefore
$\widetilde{\mathcal{W}}=\mathcal{W}$. Choose a metric such that $\langle e_i,e_j\rangle=\delta_{ij}$ and $\langle
f_i,f_j\rangle=\delta_{ij}a^2$, where $0 < a \neq 1$, then $\mathcal{W}$ and $[\mathcal{W},\mathcal{W}]$ are not
orthogonal. The orthonormal frames of the tangent and the normal spaces of $M$ can be chosen in the following way:
$$
\begin{array}{c}
\vphantom{\sum\limits_{1\leqslant i \leqslant n}}
Y_1=\frac{1}{\sqrt{1+a^2}}(e_1+f_1),Y_2=\frac{1}{\sqrt{1+a^2}}(e_2-f_2),Y_3=\frac{1}{\sqrt{1+a^2}}(e_3+f_3),\\
\vphantom{\sum\limits_{1\leqslant i \leqslant n}}
Y_4=\frac{\sqrt{1+a^2}}{a}\left(e_1-\frac{1}{a^2}f_1\right),
Y_5=\frac{\sqrt{1+a^2}}{a}\left(e_2+\frac{1}{a^2}f_2\right),
Y_6=\frac{\sqrt{1+a^2}}{a}\left(e_3-\frac{1}{a^2}f_3\right).\\
\end{array}
$$
Compute \eqref{pr2-eq1-2}, e.g., for $j=1$ and $\alpha=4$:
$$
\begin{array}{c}
\sum\limits_{1\leqslant i \leqslant 3}\langle [Y_i,Y_1]^T,[Y_i,Y_4]^T \rangle=\frac{1}{a(1+a^2)}\langle
\left(-e_3+f_3\right)^T,\left(-e_3-\frac{1}{a^2}f_3\right)^T \rangle\\
\vphantom{\sum\limits_{1\leqslant i \leqslant n}} +\frac{1}{a(1+a^2)}\langle
\left(e_2+f_2\right)^T,\left(e_2-\frac{1}{a^2}f_2\right)^T
\rangle=\frac{-2(-1+a^2)}{a(1+a^2)^2}+\frac{2(1-a^2)}{a(1+a^2)^2}\neq 0.
\end{array}
$$
It follows that the Gauss map is not harmonic.

\section{$2$-Step Nilpotent Groups and Geodesics}\label{ch4}

Recall that a Lie group $N$ is {\itshape $2$-step nilpotent} if and only if its Lie algebra $\mathcal{N}$ is $2$-step
nilpotent, i.e., $[\mathcal{N},\mathcal{N}]\neq 0$, $[[\mathcal{N},\mathcal{N}],\mathcal{N}]=0$. In other words, $0\neq
[\mathcal{N},\mathcal{N}]\subset \mathcal{Z}$, where $\mathcal{Z}$ is the center of $\mathcal{N}$. Consider a $2$-step
nilpotent Lie group $N$ with left invariant metric induced by an inner product $\langle\cdot,\cdot\rangle$ on
$\mathcal{N}$ as above. Denote by $\mathcal{V}$ the orthogonal complement to $\mathcal{Z}$ in $\mathcal{N}$. For each
$Z \in \mathcal{Z}$ define a linear operator $J(Z)\colon \mathcal{V} \rightarrow \mathcal{V}$ by $\langle
J(Z)X,Y\rangle=\langle[X,Y],Z\rangle$ for all $X$, $Y$ from $\mathcal{V}$. All $J(Z)$ are skew-symmetric. The group $N$
and the Lie algebra $\mathcal{N}$ are called {\itshape nonsingular} if for each $Z \neq 0$ the operator $J(Z)$ is
nondegenerate.

The left invariant Riemannian connection is defined by (see \cite{E1})
\begin{equation}\label{2st-1}
\begin{array}{lcll}
\nabla_{X}Y&=&\frac{1}{2}[X,Y], & X,Y \in \mathcal{V};\\
\nabla_{X}Z=\nabla_{Z}X&=&-\frac{1}{2}J(Z)X, & X \in \mathcal{V}, \, Z \in \mathcal{Z};\\
\nabla_{Z}Z^*&=&0, & Z,Z^* \in \mathcal{Z}.\\
\end{array}
\end{equation}

Let us investigate whether the Gauss map of a totally geodesic submanifold $M$ in $N$ is harmonic. It was proved in
\cite{E2} (Theorem~(4.2)) that if $N$ is simply connected and nonsingular, then a totally geodesic submanifold of dimension $n geqslant 2$ either have the Gauss map of maximal rank at any point or it is a left translation of some open subset in a totally geodesic subgroup. The latter case takes place for many classes of submanifolds, for example, for all totally geodesic $M$ such that $\dim M>\dim \mathcal{Z}$ in $2$-step nilpotent groups $N$ with $\dim N\geqslant 3$ (see \cite{E2}, Corollary~(5.6)). The structure of the corresponding subgroups (or their Lie algebras) is also described in \cite{E2} (and allows to prove, for example, that there are no totally geodesic hypersurfaces in nonsingular $2$-step nilpotent Lie groups, see \cite{E2}, Corollary~(5.8)). Anyway, in this case the Gauss map is constant, thus harmonic. Therefore it suffices to consider the case of the Gauss map with maximal rank.

For $n=\dim M=1$, i.e., for geodesics, the answer is given by the next statement:

\begin{proposition}\label{pr3}
A smooth geodesic in $2$-step nilpotent group has the harmonic Gauss map if and only if it is a left translation of
some one-parameter subgroup.
\end{proposition}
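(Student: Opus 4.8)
The plan is to apply the harmonicity criterion for totally geodesic submanifolds, equation~\eqref{eq1-2}, to the one-dimensional case $n=1$ and then simplify using the explicit connection formulas~\eqref{2st-1} for $2$-step nilpotent groups. For a geodesic, $T_e M$ is spanned by a single unit vector $Y_1$, so the tangent space is trivially a Lie triple system, and the sum over $i$ in~\eqref{eq1-2} collapses to the single term $i=1$. Writing $Y_1=X+Z$ with $X\in\mathcal{V}$ and $Z\in\mathcal{Z}$ according to the orthogonal splitting $\mathcal{N}=\mathcal{V}\oplus\mathcal{Z}$, I would compute the relevant brackets and covariant derivatives. Since $[Y_1,Y_1]=0$ and $\nabla_{Y_1}Y_1=0$ (the curve is a geodesic), the criterion~\eqref{eq1-2} reduces to the vanishing of the normal part of $\,[Y_1,[Y_\alpha,Y_1]]$-type and $\nabla_{Y_1}(\nabla_{Y_\alpha}Y_1)^\bot$-type terms for each normal direction $Y_\alpha$.

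First I would expand all quantities in terms of $X$, $Z$, the operators $J(Z)$, and the Lie bracket $[\,\cdot,\cdot\,]$, using~\eqref{2st-1}. The key structural fact is that $[\mathcal{N},\mathcal{N}]\subset\mathcal{Z}$, so brackets of two vectors always lie in the center; this makes many of the iterated brackets and second derivatives vanish or simplify drastically. Concretely, $\nabla_{Y_\alpha}Y_1$ decomposes via the three cases in~\eqref{2st-1} depending on how the $\mathcal{V}$- and $\mathcal{Z}$-components of $Y_\alpha$ and $Y_1$ pair up, and I expect the criterion to become a condition of the form ``$J(Z)X=0$ and $[X,Y_\alpha^{\mathcal{V}}]=0$ for all normal $Y_\alpha$,'' i.e.\ a statement that the geodesic's tangent vector interacts trivially with the ambient bracket structure.

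The geometric payoff is to identify this algebraic vanishing condition with the statement that $Y_1$ generates a one-parameter subgroup. Recall that $\exp(tY_1)$ is a one-parameter subgroup whose left translate is a geodesic precisely when the geodesic equation $\nabla_{Y_1}Y_1=0$ holds \emph{and} the integral curve of the left invariant field $Y_1$ coincides with the geodesic; for $2$-step nilpotent groups the explicit geodesic equations (from~\cite{E1}) show that the geodesic is a one-parameter subgroup exactly when $J(Z)X=0$, where $Z$ and $X$ are the central and non-central parts of the initial velocity. So the plan is to show that the harmonicity criterion forces exactly this condition $J(Z)X=0$ (together with the complementary bracket conditions that turn out to be automatic or equivalent), and conversely that a one-parameter subgroup has constant—hence harmonic—Gauss map, the latter being immediate from the remark after Theorem~\ref{th1} that the Gauss map of a subgroup is constant.

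The main obstacle I anticipate is the careful bookkeeping of the normal-component projections $(\,\cdot\,)^\bot$ in~\eqref{eq1-2}: because the normal space $N_e M$ is $(n+q-1)$-dimensional and mixes $\mathcal{V}$- and $\mathcal{Z}$-directions in a way that depends on the splitting of $Y_1$, verifying that the criterion is equivalent to $J(Z)X=0$ rather than to some strictly weaker or stronger condition requires checking the contribution of each normal basis vector $Y_\alpha$ separately. In particular I would need to confirm that no nontrivial cancellation occurs among the terms $[Y_1,[Y_\alpha,Y_1]]$ and $\nabla_{Y_1}([Y_\alpha,Y_1]^{T}-(\nabla_{Y_1}Y_\alpha)^\bot)$ that could allow harmonicity without $Y_1$ generating a subgroup; the nonsingularity assumptions and the centrality of $[\mathcal{N},\mathcal{N}]$ should be what rule out such spurious cancellations.
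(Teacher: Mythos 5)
Your overall strategy --- specialize \eqref{eq1-2} to $n=1$, decompose $Y_1=X+Z$ with $X\in\mathcal{V}$, $Z\in\mathcal{Z}$, simplify via \eqref{2st-1}, and identify the resulting algebraic condition with Eberlein's characterization of one-parameter subgroups through $J(Z)X=0$ --- is exactly the route the paper takes, and your ``if'' direction (a subgroup has constant, hence harmonic, Gauss map) is fine. But there is a genuine error at the central step. You assert that $\nabla_{Y_1}Y_1=0$ ``because the curve is a geodesic.'' In \eqref{eq1-2} the symbol $Y_1$ denotes the \emph{left invariant} field extending the tangent vector at the point, not the velocity field of the curve, and by \eqref{2st-1} one has $\nabla_{X+Z}(X+Z)=-J(Z)X$, which is nonzero in general. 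This term is the entire content of the criterion: since $[Y_1,Y_1]=0$, equation \eqref{eq1-2} for $n=1$ reduces to $\left([Y_1,\nabla_{Y_1}Y_1]-2\nabla_{Y_1}\left((\nabla_{Y_1}Y_1)^\bot\right)\right)^\bot=0$, which a short computation with \eqref{2st-1} turns into $\left(J(Z)^2X\right)^\bot=0$. If you discard $\nabla_{Y_1}Y_1$, the criterion degenerates to $0=0$, every geodesic would appear to have harmonic Gauss map, and the proposition would be false.

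Two further gaps. First, the terms you plan to track, of the form $[Y_1,[Y_\alpha,Y_1]]$ and $\nabla_{Y_1}(\nabla_{Y_\alpha}Y_1)^\bot$ for normal directions $Y_\alpha$, do not occur in \eqref{eq1-2}: that criterion is a single normal-valued equation for each tangent index $j$, summed over tangent indices $i$ only, so for a geodesic there is exactly one equation and no normal index to bookkeep. The anticipated extra condition ``$[X,Y_\alpha^{\mathcal V}]=0$ for all normal $Y_\alpha$'' is therefore spurious. Second, the reduction does not hand you $J(Z)X=0$ directly; it gives $\left(J(Z)^2X\right)^\bot=0$, i.e.\ $J(Z)^2X=\lambda(X+Z)$ for some $\lambda\in\mathbb{R}$. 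Since $J(Z)^2X\in\mathcal{V}$, comparing $\mathcal{Z}$-components forces $\lambda Z=0$, hence $J(Z)^2X=0$ in either case, and only then does skew-symmetry give $|J(Z)X|^2=-\langle J(Z)^2X,X\rangle=0$. After this step one may invoke Proposition~(3.5) of \cite{E1} to conclude that the geodesic is $\exp(t(X+Z))$, as in the paper.
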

\begin{proof}
The "if" part is clear, let us prove the "only if" part. Taking if necessary a left translation we can think that our
geodesic contains the identity $e$ of $N$ (similarly to the discussion in the previous section). Decompose its tangent
vector at $e$ as $X+Z$, where $X\in \mathcal{V}$ and $Z \in \mathcal{Z}$. The condition \eqref{eq1-2} with $n=1$,
$j=1$, and $Y_1=X+Z$ becomes
$$
\begin{array}{c}
0=\left([X+Z,-J(Z)X]+2\nabla_{X+Z}\left(J(Z)X\right)^\bot\right)^\bot\\
=\left([J(Z)X,X]+[X,J(Z)X]-J(Z)^2 X\right)^\bot=-\left(J(Z)^2 X\right)^\bot.\\
\end{array}
$$
Here we used the definition of $2$-step nilpotent Lie algebra, the equations \eqref{2st-1}, and the fact that $\langle
J(Z)X,X+Z\rangle=\langle J(Z)X,X\rangle=0$ because $J(Z)$ is skew-symmetric, therefore $\left(J(Z)X\right)^\bot=J(Z)X$.
The conditions mean $J(Z)^2 X=\lambda(X+Z)$, where $\lambda \in \mathbb{R}$. Thus $\lambda Z=0$, hence $Z=0$ or
$\lambda=0$, in any case $J(Z)^2 X=0$. This yields $0=\langle J(Z)^2 X,X\rangle=-|J(Z)X|^2$, therefore $J(Z)X$
vanishes. Then Proposition~(3.5) of \cite{E1} implies that the geodesic is defined by the formula $\exp(t(X+Z))$. This
gives us the desired result.
\end{proof}

Actually, the proof implies that the geodesic is a left translation of one-parameter subgroup if the Gauss map is
harmonic only at some point. Anyway, it follows that for $n=1$ the Gauss maps of maximal rank are not harmonic. It is
interesting to check whether the similar statement is true for other values of $n$.

\end{document}